%         "Using the LMS Class File"
% A Combined Sample File and Guide for Authors
% This file may be used as a template for writing a paper for submission to the LMS

\NeedsTeXFormat{LaTeX2e}

\documentclass{lms}
%Include your preferred graphics and mathematics packages here,
%using the command \usepackage{}
\usepackage{amsmath}
\usepackage{amssymb}
\usepackage[bookmarks]{hyperref}
\usepackage{enumerate}
\usepackage{tikz}

%The \newtheorem command is used to define theorem-like environments
%that normally REQUIRE A PROOF, for example:
\newtheorem{theorem}{Theorem}[section] % 1st argument is your name for it
\newtheorem{lemma}[theorem]{Lemma}     % 2nd argument is what is printed
\newtheorem{corollary}[theorem]{Corollary}
\newtheorem{proposition}[theorem]{Proposition}
\newtheorem{prop}[theorem]{Proposition}
%To control the numbering sequence of these environments, see
%Lamport's book on LaTeX [2, p. 193].

%The \newnumbered command can be used to define environments or
%independent statements that DO NOT REQUIRE A PROOF. The usual ones are:
\newnumbered{assertion}{Assertion}    % 1st argument is your name for it
\newnumbered{conjecture}{Conjecture}  % 2nd argument is what is printed
\newnumbered{definition}{Definition}
\newnumbered{hypothesis}{Hypothesis}
\newnumbered{remark}{Remark}
\newnumbered{note}{Note}
\newnumbered{observation}{Observation}
\newnumbered{problem}{Problem}
\newnumbered{question}{Question}
\newnumbered{algorithm}{Algorithm}
\newnumbered{example}{Example}
\newunnumbered{notation}{Notation} % This is usually unnumbered
% The numbering sequence of these environments can be controlled in the
% same way as for \newtheorem; see Lamport's book on LaTeX, p. 193.

% The default LMS numbering of equations in long papers is (1.1), (1.2), (2.1), etc.
% In short papers, to change the numbering to (1), (2), etc., 'uncomment' the next line.
% \simpleequations
% Otherwise, use the AMS \numberwithin command.

% Equations will be numbered with two figures, the first is the section number
%\numberwithin{equation}{section}

\newcommand{\Pol}{\mathcal{P}}
\newcommand{\cX}{\mathcal{X}}
\newcommand{\cM}{\mathcal{M}}

\newcommand{\h}{\mathcal{H}}
\newcommand{\D}{\mathbb{D}}
\newcommand{\T}{\mathbb{T}}
\newcommand{\C}{\mathbb{C}}

\newcommand{\ord}{\operatorname{ord}}
\newcommand{\ro}{\operatorname{ro}}

\newcommand{\inner}[2]{{\langle#1,#2\rangle}}

\newcommand{\binner}[2]{{\big\langle#1,#2\big\rangle}}

\newcommand{\cls}{\operatorname{cl}}
\newcommand{\spn}{\operatorname{span}}

\allowdisplaybreaks[2]

\doi{blms.12620}

% TOP MATTER

\title[]% end with percent
{Analogues of Finite Blaschke Products as Inner Functions} % This is the full title of the paper
% Use lowercase letters in title except for proper names
% Avoid equations in title if possible
% Do not use the \thanks{} command; use \extraline{} instead (see below).

\author{Christopher Felder and Trieu Le}

%\dedication{A dedication can be included here}

%Insert `2020 Mathematics Subject Classification' numbers here:
\classno{30H99 (primary), 30H10 30H20 46E22 (secondary)}

\extraline{The first author was supported in part by NSF Grant \#1565243.}

\begin{document}
\maketitle

\begin{abstract}
We give a generalization of the notion of finite Blaschke products from the perspective of generalized inner functions in various reproducing kernel Hilbert spaces. Further, we study precisely how these functions relate to the so-called Shapiro--Shields functions and  shift-invariant subspaces generated by polynomials. Applying our results, we show that the only entire inner functions on weighted Hardy spaces over the unit disk are multiples of monomials, extending recent work of Cobos and Seco.
\end{abstract}

\section{Introduction and Background}
Let $H^2$ denote the classical Hardy--Hilbert space on the unit disk $\D$, the space of analytic functions on $\D$ that have square-summable Maclaurin coefficients. In 1948, Arne Beurling \cite{beurling1949two} proved several seminal results regarding function and operator theory in $H^2$. One result showed that every function in $H^2$ may be factored as $\theta U$, where $|\theta| =1$ almost everywhere on the unit circle $\mathbb{T}$ (coined as \textit{inner}), and $U$ is such that $\log|U(0)| = \int_0^{2\pi} \log|U(e^{i\theta})|$ (coined \textit{outer}). This result allowed him to characterize shift-invariant subspaces of $H^2$; those subspaces $M \subseteq H^2$ with the property that $SM \subseteq M$, where $S: H^2 \to H^2$ is given by $f(z) \mapsto zf(z)$. Namely, these subspaces must equal $\theta H^2$, for some inner function $\theta$.

The simplest $H^2$-inner functions are called \textit{finite Blaschke products}, given for $\beta_1, \ldots, \beta_n \in \D$ as
\[
B(z) = \lambda \prod_{j = 1}^n \frac{z - \beta_j}{1 - \overline{\beta_j}z},
\]
where $\lambda \in \T$. 
One may check that $|B| = 1$ on $\T$, and so $B$ is in fact inner. We recall that \textit{Blaschke factors}, given when $n=1$ in $B$ above, define the automorphisms of $\D$, up to a unimodular constant. See the recent book \cite{GarciaSC2018} for a nice treatment of finite Blaschke products and their applications.

Let us start with an observation. Applying a partial fractions decomposition, and assuming simple zeros different from the origin, one can check that any finite Blaschke product can be expressed as
\[
B(z) = c_0 - \sum_{j = 1}^n \frac{c_j}{1 - \overline{\beta_j}z}
\]
for some constants $c_0, c_1, \ldots, c_n \in \C$. 
Further, each term in the sum can be seen as a scalar multiple of the Szeg\H{o} kernel,  $s_{\lambda}(z) = 1/(1-\overline{\lambda}z)$, $\lambda \in \D$. 
Noting that $s_0(z) = 1$, we have
\[
B(z) = c_0s_0(z) - \sum_{j = 1}^n c_j s_{\beta_j}(z).
\]
Further, the Szeg\H{o} kernel is the \textit{reproducing kernel} for $H^2$, which means that for every $\lambda \in \D$ and every $f \in H^2$, $\langle f, s_\lambda \rangle_{H^2} = f(\lambda)$. Consequently, every Blaschke product with simple zeros can be seen as a linear combination of reproducing kernels. If $B$ also has repeated zeros, certain derivatives of kernel functions will also be needed in the linear combination. Nonetheless, it turns out this observation actually characterizes finite Blaschke products among inner functions: an $H^2$-inner function $f$ is a finite Blaschke product if and only if $f$ is a linear combination of reproducing kernels and their derivatives.

We will prove this result in a more general setting in Theorem \ref{modified_le}. Toward this generalization, we note that the Hardy space $H^2$ is just one example of a function space where point evaluation can be recovered via its inner product. Spaces of this type are known as \textit{reproducing kernel Hilbert spaces} (RKHS). 
The aim of this paper is to show, working in a general class of RKHSs, and using a generalized definition of inner, that there are analogous inner functions characterized as certain linear combinations of reproducing kernels. We call functions characterized in this way \textit{analogues of finite Blaschke products}. Further, we show precisely how these functions arise as certain Gram determinants, or as certain projections onto shift-invariant subspaces generated by polynomials. 

Throughout this paper, $\h$ will denote a reproducing kernel Hilbert space of analytic functions on some planar domain $\Omega \subset \C$ with $0 \in \Omega$. Formally, $\h$ is a complete inner product space comprised of functions mapping $\Omega \to \C$ that has the reproducing kernel property, i.e. for each $w \in \Omega$, the linear functional given by $f \mapsto f(w)$ is bounded. Equivalently, by the Riesz representation theorem, for each $w \in \Omega$, there exists a unique function $k_w \in \h$ such that for all $f \in \h$, $\langle f, k_w \rangle = f(w)$. We also ask that $\h$ has the following properties:
\begin{enumerate}[(i)]
\item The forward shift operator $S$, given by $f(z) \mapsto zf(z)$, is bounded on $\h$.
\item The analytic polynomials $\Pol$ form a dense subset of $\h$.
\end{enumerate}
The following notation will be used throughout the paper:
\begin{enumerate}[(a)]
    \item When $V \subseteq \h$ is a closed subspace, we will use $\Pi_V: \h \to V$ to denote the orthogonal projection from $\h$ onto $V$.
    \item When $X \subseteq \h$ is a subset, we will use $\cls_{\h}(X)$ to denote the closure of $X$ in $\h$. 
    \item For $f \in \h$, we will use the standard notation $\ord_0(f)$ to denote the order of the zero of $f$ at the origin.
    \item For a polynomial $f$, we will let $Z(f)$ be the multiset containing the zeros $f$, i.e. $Z(f)$ is the zero set of $f$, each zero listed with its multiplicity. 
\end{enumerate}

The primary examples of such spaces we keep in mind are those where $\Omega = \D$ and $\h$ is a subset of the analytic functions on $\D$. A well-known family of such spaces are the so-called weighted Hardy spaces. Given a sequence of positive numbers $w = \{w_k\}_{k \ge 0}$, with $\lim_{k \to \infty} w_k/w_{k+1} = 1$, define the weighted Hardy space as
\[
H^2_w : = \left\{ \sum_{k \ge 0} a_k z^k \in \textrm{Hol}(\D) : \sum_{k \ge 0} w_k |a_k|^2 < \infty \right\}.
\]
For $f(z) = \sum_{k \ge 0} a_k z^k$ and $g(z) = \sum_{k \ge 0} b_k z^k$ in $H^2_w$, their inner product in $H^2_w$ is given by 
\[
\langle f, g \rangle_w = \sum_{k\ge 0}w_k a_k \overline{b_k}. 
\]
One may also verify (e.g. see \cite[Section~2.1]{Cowen1995}) that these spaces are reproducing kernel Hilbert spaces with reproducing kernel  
\[
k_\beta(z) = \sum_{n\ge0} \frac{1}{w_k}\left(\bar{\beta} z\right)^n
\]
for, a priori, $\beta, z \in \D$. It turns out that under appropriate conditions on the weight sequence, the above kernel function extends to points on the unit circle as well. 

If we let $\alpha \in \mathbb{R}$ and $w = \{(k+1)^\alpha\}_{k\ge 0}$, we recover the so-called Dirichlet-type space $\mathcal{D}_\alpha$. Of these spaces, $\alpha = -1$ coincides with the Bergman space $A^2$, $\alpha = 0$ corresponds to the Hardy space $H^2$, and $\alpha = 1$ gives the Dirichlet space $\mathcal{D}$. 

\section{Inner Functions}
Although analogous definitions of Beurling's inner and outer functions in spaces other than $H^2$ have been made, the objects they describe are not as well understood. 
We give these definitions here. 

\begin{definition}[(Cyclic function)]
Say that $f \in \h$ is cyclic (for $S$ in $\h$) if 
\[
[f] : = \cls_\h\left(\textrm{span}\{ z^kf : k \ge 0 \}\right)
\]
is equal to all of $\h$. 
\end{definition}
Note that $[f]$ is always a shift-invariant subspace, i.e. $S[f] \subseteq [f]$, and is the smallest such containing $f$. The space $[f]$ is read ``bracket $f$" or ``the (shift-invariant) subspace generated by $f$." In $H^2$, it is well known that cyclic functions are precisely the outer functions.

Recall that a bounded analytic function $f$ on $\D$ is called an inner function if $|f(\zeta)|=1$ for a.e. $|\zeta|=1$. Inner functions play an important role in function theory and operator theory on Hardy spaces. See \cite{ChalendarNWEJM2015} for a recent survey of classical and new results linking inner functions and operator theory. Besides the above definition, inner functions can also be characterized via the inner product in $H^2$. Indeed, it can be checked that a function $f\in H^2$ is inner if and only if $\|f\|_{H^2}=1$ and $\inner{z^mf}{f}=0$ for all integers $m\geq 1$.

In the case of the Dirichlet space $\mathcal{D}$, Richter \cite{RichterJRAM1988} showed that any shift-invariant subspace is also generated by a single function that satisfies the same orthogonality properties above.
 Aleman, Richter and Sundberg \cite{aleman1996beurling} proved an analogue of Beurling's Theorem for the Bergman space $A^2$: any invariant subspace $\cM$ of $A^2$ is generated by the so-called wandering subspace $\cM\ominus z\cM$. Any unit norm function in this subspace satisfies $\|f\|_{A^2}=1$ and $z^mf\perp f$ for all $m\geq 1$ and is called an $A^2$-inner function. Prior to this work, Hedenmalm \cite{HedenmalmJRAM1991} showed the existence of so-called contractive zero-divisors, which play the role of Blaschke products in the Bergman space. In certain cases, explicit formulas for these functions have been given, e.g. see MacGregor and Stessin \cite{macgregor1994weighted} and Hansbo \cite{hansbo1998reproducing}. These results are phrased in the language of \textit{extremal functions}. Although our work here will not explicitly cover this aspect, it is well known that (normalized) inner functions are solutions to the extremal problem
\[
\sup\left\{ \operatorname{Re}(g^{(d)}(0)) : g \in \mathcal{M}, \|g\| \le 1 \right\},
\]
where $\mathcal{M}$ is a shift-invariant subspace and $d$ is the smallest integer so that $z^d \notin \mathcal{M}^\perp$.
 See \cite[Chapters~5 and 9]{DurenAMS2004} and \cite[Chapter~3]{HedenmalmSpringer2000} for a detailed discussion of inner functions on Bergman spaces $A^p$. 
Researchers have thus defined the notion of inner functions in more general reproducing kernel Hilbert spaces.

\begin{definition}[(Inner function)]
Say that $f \in \h \setminus \{0\}$ is $\h$-inner if, for all $k \ge 1$,
\[
\langle f, z^k f \rangle = 0.
\]
\end{definition}

This definition was originally considered in \cite{aleman1996beurling} for the Bergman space. The authors there also require an inner function to be of unit norm, as well as other authors. In a recent paper, Cheng, Mashreghi and Ross \cite{cheng2019inner} introduced and studied the notion of inner functions with respect to a bounded linear operator. However, they do not require unit norm, which turns out more convenient in several situations. We will follow their approach in this work. 
Although no function-theoretic description of inner functions is known in general reproducing Hilbert spaces, there are known constructions of certain types of inner functions. We will introduce one of these constructions at the end of Section \ref{RepPoints}. 

B\'en\'eteau et al. \cite{BeneteauJLMS2016,BeneteauCMB2018} studied inner functions and examined the connections between them and optimal polynomial approximants on weighted Hardy spaces. They also described a method to construct inner functions that are analogues of finite Blaschke products with simple zeroes. In \cite{SecoCAOT2019}, Seco discussed inner functions on Dirichlet-type spaces and characterized such functions as those whose norm and multiplier norm are equal. 
In \cite{LeAMP2020}, the second author studied inner functions on weighted Hardy spaces and obtained generalizations of several results from \cite{BeneteauCMB2018,SecoCAOT2019}. 
In a recent paper \cite{BeneteauAMP2019}, B\'en\'eteau et al. investigated inner functions on general simply connected domains in the complex plane. We would like to mention that operator-valued inner functions on vector-valued weighted Hardy spaces have also been defined and studied \cite{BallIEOT2013,BallASM2013,OlofssonAiA2007}. In particular, Ball and Bolotnikov \cite{BallASM2013} obtained a realization of inner functions on vector-valued weighted Hardy spaces. In \cite{BallCM2016}, they investigated the expansive multiplier property of inner functions. They obtained a sufficient condition on the weight sequences for which any inner function has the expansive multiplier property. Recently, Cheng, Mashreghi, and Ross considered inner vectors for Toeplitz operators \cite{MR4061942} and for the shift operator in the Banach space setting of $\ell^p_A$ \cite{MR3976584}.

We point out here that if $f$ is inner, then $\langle pf, f \rangle = p(0)\|f\|^2$ for all $p \in \mathcal{P}$, and equivalently, up to a constant, $f$ is the reproducing kernel at the origin for $[f]$ (we ask for $0 \in \Omega$ for this kernel to make sense). The converse is also true, and we record this result below.

\begin{prop}
Let $f \in \h$ and suppose $d = \ord_0(f)$. Then $f$ is $\h$-inner if and only if $f$ is a non-zero constant multiple of $\Pi_{[f]}(k_0^{(d)})$.
Moreover, $\Pi_{[f]}(k_0^{(d)})$ is always an $\h$-inner function. 
\end{prop}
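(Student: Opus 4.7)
Let me denote $g_0 := \Pi_{[f]}(k_0^{(d)})$, where $k_0^{(d)} \in \h$ is the element representing the bounded functional $g \mapsto g^{(d)}(0)$. My plan is to first establish that $g_0$ is inner, then prove the characterization by a short orthogonality argument.

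\textbf{Step 1: $g_0$ is nonzero with $\ord_0(g_0) = d$.} Any element of $[f]$ is an $\h$-limit of polynomial multiples $p_n f$, and each $p_n f$ has order of vanishing at $0$ at least $d$. Since the evaluation functionals $g \mapsto g^{(j)}(0)$ are continuous on $\h$ for all $j \geq 0$ (they are bounded by the RKHS structure), passing to the limit gives $g_0^{(j)}(0) = 0$ for $0 \le j < d$. On the other hand, $\langle f, g_0 \rangle = \langle f, k_0^{(d)}\rangle = f^{(d)}(0) \neq 0$ (using that $f \in [f]$ and $\Pi_{[f]}$ is self-adjoint), so $g_0 \neq 0$, and in fact $g_0^{(d)}(0) = \langle g_0, k_0^{(d)}\rangle = \|g_0\|^2 > 0$.

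\textbf{Step 2: $g_0$ is $\h$-inner.} For each $k \geq 1$, the function $z^k g_0$ lies in $[f]$ because $[f]$ is shift-invariant and $g_0 \in [f]$. Therefore
\[
\langle g_0, z^k g_0 \rangle = \langle \Pi_{[f]}(k_0^{(d)}),\, z^k g_0\rangle = \langle k_0^{(d)}, z^k g_0\rangle = \overline{(z^k g_0)^{(d)}(0)}.
\]
By Leibniz's rule, $(z^k g_0)^{(d)}(0)$ is a linear combination of the values $g_0^{(d-k)}(0), g_0^{(d-k+1)}(0), \ldots$, all of which involve orders strictly less than $d$ when $k \ge 1$. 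By Step 1, these vanish, so $\langle g_0, z^k g_0 \rangle = 0$, proving $g_0$ is inner.

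\textbf{Step 3: If $f$ is $\h$-inner, then $f$ is a nonzero scalar multiple of $g_0$.} Take any $h \in [f]$ and write $h = \lim_n p_n f$ with $p_n \in \Pol$. Because $f$ is inner, $\langle p_n f, f\rangle = p_n(0)\|f\|^2$. On the other hand, Leibniz's rule together with $f^{(j)}(0) = 0$ for $j < d$ gives $(p_n f)^{(d)}(0) = p_n(0) f^{(d)}(0)$, and by continuity of the $d$-th derivative functional, $p_n(0) f^{(d)}(0) \to h^{(d)}(0)$. Since $f^{(d)}(0) \neq 0$, we can solve for $p_n(0)$ and take limits:
\[
\langle h, f \rangle = \frac{\|f\|^2}{f^{(d)}(0)}\, h^{(d)}(0) = \Binner{h}{\tfrac{\overline{f^{(d)}(0)}}{\|f\|^2}\, g_0},
\]
where the last equality uses that $g_0$ reproduces the $d$-th derivative on $[f]$. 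Thus $f - \bigl(f^{(d)}(0)/\|f\|^2\bigr) g_0$ lies in $[f]$ and is orthogonal to every $h \in [f]$, hence equals $0$. The converse direction is immediate from Step 2, since any nonzero scalar multiple of an inner function is inner.

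The main subtlety is the passage to limits in Step 3: one must justify that $p_n(0)$ converges, which requires the nonvanishing $f^{(d)}(0) \neq 0$ afforded by the hypothesis $d = \ord_0(f)$. Everything else is a clean application of the reproducing property combined with the shift-invariance of $[f]$.
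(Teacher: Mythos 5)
Your argument is correct and follows essentially the same route as the paper's proof: the ``always inner'' claim and the converse come from pairing $z^k g_0$ (resp.\ $z^k f$) against $k_0^{(d)}$ through the self-adjoint projection and using that these vanish to order greater than $d$ at the origin, while the forward direction amounts to testing $f - c\,\Pi_{[f]}(k_0^{(d)})$ against the dense set $\{pf : p\in\Pol\}$ of $[f]$, exactly as the paper does. One cosmetic slip: in the last display of Step 3 the constant is inverted --- since $\langle h, c\,g_0\rangle = \bar{c}\,h^{(d)}(0)$ for $h\in[f]$, matching $\langle h, f\rangle = \frac{\|f\|^2}{f^{(d)}(0)}\,h^{(d)}(0)$ forces $c = \|f\|^2/\overline{f^{(d)}(0)}$ rather than $\overline{f^{(d)}(0)}/\|f\|^2$; this does not affect the conclusion, which only asserts \emph{some} nonzero constant multiple.
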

\begin{proof}
Without loss of generality, assume that $f$ has unit norm. Consider the forward implication. For all analytic polynomials $p$, we have
\begin{align*}
    \left\langle pf, \overline{f^{(d)}(0)}f -  \Pi_{[f]}(k_0^{(d)}) \right\rangle
    &= p(0)f^{(d)}(0) - \sum_{k = 0}^d {d \choose k}p^{(k)}(0) f^{(d - k)}(0) \\
    & = p(0)f^{(d)}(0) - p(0)f^{(d)}(0) = 0.
\end{align*}
Thus, $\overline{f^{(d)}(0)}f =  \Pi_{[f]}(k_0^{(d)})$. 

Conversely, suppose $f = c \Pi_{[f]}(k_0^{(d)})$ for some non-zero constant $c$. Then, for $k \ge 1$, we have 
\begin{align*}
    \langle z^kf, f \rangle &= \langle z^kf,  c \Pi_{[f]}(k_0^{(d)})\rangle = \overline{c} \langle z^kf, k_0^{(d)}\rangle = 0
\end{align*}
since $z^kf(z)$ vanishes at the origin with order at least $d+1$.
Thus, $f$ is $\h$-inner. Further, this shows that $\Pi_{[f]}(k_0^{(d)})$ is alway $\h$-inner. 
\end{proof}

In \cite{cheng2019inner}, the authors conducted a robust exploration of inner functions. It was shown there \cite[Proposition~3.1]{cheng2019inner} that every inner function is given by
$\Pi_{[Sf]^{\perp}}(f)$. We show here that, up to a constant, this function is the same as a projection of a kernel onto a shift invariant subspace. 
\begin{prop}
Let $f \in \h$ and let $d = \ord_0(f)$. Put \[J = f - \Pi_{[Sf]}(f) = \Pi_{[Sf]^{\perp}}(f)\] and $v=\Pi_{[f]}(k^{(d)}_0)$.
Then we have
\[v = \frac{v^{(d)}(0)}{f^{(d)}(0)}J.\]
\end{prop}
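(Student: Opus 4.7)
My plan is to show that both $v$ and $J$ belong to the subspace $[f] \cap [Sf]^\perp$, verify that this intersection equals $\C J$, and then pin down the proportionality constant by taking a $d$-th derivative at the origin.

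For $v$: by construction $v \in [f]$, and for any $k \geq 1$, using self-adjointness of $\Pi_{[f]}$ together with $z^k f \in [f]$, we get $\langle z^k f, v\rangle = \langle z^k f, \Pi_{[f]}(k_0^{(d)})\rangle = \langle z^k f, k_0^{(d)}\rangle = (z^k f)^{(d)}(0) = 0$, since $z^k f$ vanishes at the origin to order $d+k > d$. Hence $v \in [Sf]^\perp$. For $J$: it lies in $[f]$ since $\Pi_{[Sf]}(f) \in [Sf] \subseteq [f]$, and it lies in $[Sf]^\perp$ by definition.

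The critical step is the orthogonal decomposition $[f] = \C J \oplus [Sf]$. The inclusion $\C J + [Sf] \subseteq [f]$ is immediate. For the reverse, observe that $zJ = zf - z\,\Pi_{[Sf]}(f) = Sf - z\,\Pi_{[Sf]}(f)$ lies in $[Sf]$, since both summands do by shift-invariance; thus $z^k J \in [Sf]$ for every $k \geq 1$. Combined with $f = J + \Pi_{[Sf]}(f)$, this shows $z^k f \in \C J + [Sf]$ for every $k \geq 0$. Because $\C J + [Sf]$ is closed (sum of a finite-dimensional subspace and a closed one), it contains all of $[f]$. Orthogonality of the sum follows from $J \perp [Sf]$, so $[f] \cap [Sf]^\perp = \C J$, and therefore $v = cJ$ for some scalar $c$.

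To identify $c$: every $g \in [Sf]$ vanishes at the origin to order at least $d+1$, since evaluation of the $j$-th derivative at $0$ is continuous on $\h$ and each spanning element $z^{k+1}f$ $(k \geq 0)$ has that property. In particular $\Pi_{[Sf]}(f)$ vanishes to order $\geq d+1$ at $0$, so $J^{(d)}(0) = f^{(d)}(0) \neq 0$. Taking $d$-th derivatives at $0$ in $v = cJ$ yields $c = v^{(d)}(0)/f^{(d)}(0)$, which also handles the degenerate case $v = 0$. The main technical hurdle is justifying the decomposition $[f] = \C J \oplus [Sf]$; everything hinges on the short observation that $zJ$ returns to $[Sf]$, which is exactly what forces $[f] \cap [Sf]^\perp$ to be one-dimensional and collapses the problem to a derivative comparison.
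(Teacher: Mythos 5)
Your proof is correct and follows essentially the same route as the paper: both arguments place $v$ and $J$ in $[f]\ominus[Sf]$, use the one-dimensionality of that space to get $v=cJ$, and determine $c$ by comparing $d$-th derivatives at the origin via $J^{(d)}(0)=f^{(d)}(0)$. The only difference is that you supply an explicit proof that $[f]\ominus[Sf]=\C J$ (via the observation that $zJ\in[Sf]$), a point the paper simply asserts.
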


\begin{proof}
Note that any element of $[Sf]$ vanishes at the origin with multiplicities at least $d+1$. It follows that $k^{(d)}_0\perp [Sf]$ and hence $v\perp [Sf]$, which implies that $v\in [f]\ominus [Sf]$. On the other hand, we also have $J\in [f]\ominus[Sf]$. Since $[f]\ominus[Sf]$ is a one dimensional space, we conclude that $v = \lambda J$ for some constant $\lambda$. To find the constant $\lambda$, let us compute the inner product
\[J^{(d)}(0) = \inner{J}{k^{(d)}_0} = \inner{f}{k^{(d)}_0} - \inner{\Pi_{[Sf]}(f)}{k^{(d)}_0} = f^{(d)}(0)\]
because $k_0^{(d)}\perp [Sf]$. It then follows that
\[\lambda = \frac{\inner{v}{k^{(d)}_0}}{\inner{J}{k^{(d)}_0}} = \frac{v^{(d)}(0)}{f^{(d)}(0)}\]
and the conclusion follows.
\end{proof}

In \cite{shapiro1962zeros}, Shapiro and Shields used Gram determinants to produce linear combinations of reproducing kernels that are Dirichlet-inner functions. This was then generalized by B\'en\'eteau et al. in \cite{BeneteauCMB2018}, and further by the second author \cite{LeAMP2020} to weighted Hardy spaces over the unit disk. Surprisingly, as we will see later, even in general RKHSs in which monomials are not necessarily orthogonal, such a construction (Definition \ref{SS-function}) is the only way to produce inner functions that are linear combinations of kernels (see Theorem \ref{modified_le}).

\subsection{Gram Determinants}\label{Gram}
Let $v_1, \ldots, v_n$ be vectors in an inner product space. We will denote the associated \textit{Gram matrix} by
\[
G(v_1, \ldots, v_n) = (\langle v_i, v_j\rangle)_{1 \le i, j \le n} =
\begin{bmatrix}
\langle v_1, v_1\rangle & \dots & \langle v_1, v_n\rangle\\
\vdots & \ddots & \vdots \\
\langle v_n, v_1\rangle & \dots & \langle v_n, v_n\rangle
\end{bmatrix}.
\]
The \textit{Gram determinant} is then $\det(G(v_1, \ldots, v_n))$. 
Note that for any $x = (x_1, \ldots, x_n) \in \C^n$, 
\[
\langle G(v_1, \ldots, v_n)x, x \rangle = \Big\langle \sum_{i = 1}^n x_i v_i, \sum_{j = 1}^n x_j v_j \Big\rangle \ge 0.
\]
Hence, every Gram matrix is positive semidefinite.
Moreover, the vectors $\{ v_1, \ldots v_n\}$ are linearly independent if and only if $G(v_1, \ldots, v_n)$ has full rank, and equivalently, if and only if $\det(G(v_1, \ldots, v_n)) > 0$.

Similarly, for any vector $u$, we define
\[
D(u; v_1, \ldots, v_n) := \det
\begin{bmatrix}
u & \langle u, v_1\rangle & \dots & \langle u, v_n\rangle\\
v_1 & \langle v_1, v_1\rangle & \dots & \langle v_1, v_n\rangle\\
\vdots & \vdots & \ddots & \vdots \\
v_n &\langle v_n, v_1\rangle & \dots & \langle v_n, v_n\rangle
\end{bmatrix}.
\]
Note that $D(u; v_1, \ldots, v_n)$ is a linear combination of $u, v_1, \ldots, v_n$. 

For our purposes, it is critical to note that $D(u; v_1, \ldots, v_n)$ is orthogonal to each $v_j, 1 \le j \le n$. When $\delta := \det(G(v_1, \ldots, v_n)) > 0$, we have that 
\[
u - \delta^{-1} D(u; v_1, \ldots, v_n)
\]
is a linear combination of the vectors $\{v_1, \ldots, v_n \}$, since the coefficient of $u$ in $D(u; v_1, \ldots, v_n)$ is $\delta$. 
Consequently, we have the following well-known lemma. 
\begin{lemma}\label{SS-proj}
Let $v_1, \ldots, v_n$ be linearly independent vectors in an inner product space $\mathcal{V}$. Then for any $u\in\mathcal{V}$, 
\[
\frac{D(u; v_1, \ldots, v_n)}{\det(G(v_1, \ldots, v_n))}
\]
is the orthogonal projection of $u$ onto $(\spn\{v_1, \ldots, v_n \})^\perp$. 
\end{lemma}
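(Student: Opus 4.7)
The plan is to apply directly the two structural observations about $D(u;v_1,\ldots,v_n)$ that appear in the paragraph preceding the lemma, and then invoke uniqueness of the orthogonal decomposition $\mathcal{V}=W\oplus W^\perp$ where $W=\spn\{v_1,\ldots,v_n\}$. Set $\delta=\det(G(v_1,\ldots,v_n))$, which is strictly positive since $v_1,\ldots,v_n$ are linearly independent, and put $w=D(u;v_1,\ldots,v_n)/\delta$.

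First I would show $u-w\in W$. Expanding the determinant defining $D(u;v_1,\ldots,v_n)$ by cofactors along the first column yields
\[
D(u;v_1,\ldots,v_n)=\delta\, u+\sum_{j=1}^{n}c_j v_j
\]
for some scalars $c_j$ (the coefficient of $u$ is precisely the cofactor obtained by deleting the first row and first column, namely $\delta$). Dividing by $\delta$ gives $w=u+\sum_j(c_j/\delta)v_j$, hence $u-w\in W$.

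Second I would show $w\in W^\perp$, i.e.\ $\langle w,v_k\rangle=0$ for each $k$. The inner product $\langle \cdot ,v_k\rangle$ is conjugate-linear in the second slot but linear in the first slot (the slot occupied by the vector entries of column~1 of the displayed matrix). Applying it termwise in the cofactor expansion amounts to replacing the vector-valued first column $(u,v_1,\ldots,v_n)^{T}$ by the scalar column $(\langle u,v_k\rangle,\langle v_1,v_k\rangle,\ldots,\langle v_n,v_k\rangle)^{T}$. But this column is identical to the $(k{+}1)$-st column of the original matrix, so the resulting determinant has two equal columns and vanishes. Hence $\langle D(u;v_1,\ldots,v_n),v_k\rangle=0$ for each $k$, which forces $w\perp W$.

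Combining the two steps, $u=w+(u-w)$ with $w\in W^\perp$ and $u-w\in W$; by uniqueness of orthogonal decomposition, $w=\Pi_{W^\perp}(u)$, which is the claim. There is no real obstacle here — the only subtle point worth stating carefully is the ``determinant of a matrix with vectors in one column'' convention, and in particular that pairing with $v_k$ commutes with cofactor expansion along that column, so that the two-equal-columns argument is legitimate.
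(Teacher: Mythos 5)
Your proof is correct and follows essentially the same route as the paper, which establishes the lemma by exactly the two observations you prove in detail (the coefficient of $u$ in $D(u;v_1,\ldots,v_n)$ is $\delta$, and $D(u;v_1,\ldots,v_n)\perp v_k$ for each $k$ via the two-equal-columns argument) and then combines them via uniqueness of the orthogonal decomposition. The only difference is that the paper states these facts in the paragraph preceding the lemma without writing out the cofactor expansion, so your write-up simply supplies the details the paper leaves to the reader.
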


We note here that for any vectors $u, v_1,\ldots, v_n$ with $n\geq 2$, the coefficient of $v_{n}$ in $D(u; v_1,\ldots, v_n)$ is exactly
$-\big\langle D(u; v_1,\ldots, v_{n-1}), v_n\big\rangle$.

For a set of distinct points $\beta_1, \ldots, \beta_n \subset \D$, the authors in \cite{BeneteauCMB2018} coined the \textit{Shapiro--Shields function} as the normalization of $D(1; k_{\beta_1}, \ldots, k_{\beta_n})$, where the inner product is taken in some $H^2_w$. This follows the construction of Shapiro and Shields in \cite{shapiro1962zeros}. They also showed that this function is always inner.

\section{Reproducible Points}\label{RepPoints}
It is important to note that the spaces in which we are working are RKHSs on $\Omega$ but may also have kernels that extend to points outside of $\Omega$. For example, the linear functional of point evaluation extends boundedly to the unit circle in the Dirichlet-type spaces when $\alpha >1$ (more on this below in Example \ref{D-kernels}). 

It is our aim to uncover the relationship between $\h$-inner functions, linear combinations of reproducing kernels, generalized Shapiro--Shields functions, and projections of kernels onto shift-invariant subspaces generated by polynomials. If $\beta \notin \Omega$, but point evaluation is bounded at $\beta$, we can still make sense of, for example, the function $D(u; k_\beta)$ when $u \in \h$. 
We introduce here a framework to handle such points. 

\begin{definition}[(Reproducible point/order)]
Say $\beta \in \C$ is a reproducible point of order $m$ in $\h$ if the linear functional
\[
p \mapsto p^{(m)}(\beta)
\]
extends from $\mathcal{P}$ to a bounded linear functional on $\h$. 
\end{definition}

It is evident that any $\beta\in\Omega$ is reproducible of order $m$ for all $m\geq 1$. On the other hand, points outside of $\Omega$ may only be reproducible up to a certain finite order. First we establish a simple fact about the orders of a reproducible point.

\begin{lemma}\label{reprod}
If $\beta$ is reproducible of order $m$, then it is also reproducible of all orders $0\leq j\leq m$.
\end{lemma}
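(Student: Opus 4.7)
The plan is to reduce boundedness of the order-$j$ evaluation functional to boundedness of the order-$m$ evaluation functional by multiplying polynomials by $(z-\beta)^{m-j}$, using that multiplication by $z$ (and hence by any polynomial in $z$) is bounded on $\h$.

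Fix $0 \le j < m$ (the case $j=m$ being the hypothesis). For an arbitrary polynomial $p \in \mathcal{P}$, set
\[
q(z) := (z-\beta)^{m-j}\, p(z) \in \mathcal{P}.
\]
By the Leibniz rule, $q^{(m)}(\beta)$ is a sum of terms of the form $\binom{m}{k}\big[(z-\beta)^{m-j}\big]^{(k)}(\beta)\, p^{(m-k)}(\beta)$, and $[(z-\beta)^{m-j}]^{(k)}(\beta)$ vanishes for every $k \ne m-j$. Only $k=m-j$ survives, giving
\[
q^{(m)}(\beta) = \binom{m}{m-j}(m-j)!\, p^{(j)}(\beta) = \frac{m!}{j!}\, p^{(j)}(\beta).
\]

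Next I would invoke the hypothesis at order $m$: there is a constant $C>0$ with $|q^{(m)}(\beta)| \le C\,\|q\|_{\h}$ for every $q \in \mathcal{P}$. To control $\|q\|_{\h}$ in terms of $\|p\|_{\h}$, observe that $q = (S-\beta I)^{m-j} p$, where $S$ is the forward shift. Since $S$ is bounded on $\h$ by hypothesis, so is $(S-\beta I)^{m-j}$, and therefore
\[
\|q\|_{\h} \le \|(S-\beta I)^{m-j}\|\cdot \|p\|_{\h}.
\]

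Combining these estimates yields
\[
\bigl|p^{(j)}(\beta)\bigr| = \frac{j!}{m!}\bigl|q^{(m)}(\beta)\bigr| \le \frac{j!\,C}{m!}\,\|(S-\beta I)^{m-j}\|\cdot \|p\|_{\h},
\]
so $p \mapsto p^{(j)}(\beta)$ is bounded on $\mathcal{P}$ in the $\h$-norm and extends (uniquely, by density of $\mathcal{P}$ in $\h$) to a bounded linear functional on $\h$. Hence $\beta$ is reproducible of order $j$. There is no real obstacle here; the only point requiring any care is the Leibniz computation, which collapses to a single term precisely because all lower derivatives of $(z-\beta)^{m-j}$ vanish at $\beta$.
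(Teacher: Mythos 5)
Your proof is correct and follows essentially the same route as the paper's: both reduce the order-$j$ functional to the order-$m$ functional by applying it to $(z-\beta)^{m-j}p$ and using boundedness of the shift to control $\|(z-\beta)^{m-j}p\|$ by $\|p\|$. (Your Leibniz constant $m!/j!$ is in fact the correct one; the paper writes $\binom{m}{j}$, which is off by a harmless factor of $(m-j)!$.)
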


\begin{proof}
By assumption, there exists a constant $C_m>0$ such that for all polynomials $q$, one has
\[
|q^{(m)}(\beta)| \leq C_m\|q\|.
\]
For any $0\leq j\leq m$, since multiplication by $z$ is bounded, there exists a constant $B_{j}>0$ such that $\|(z-\beta)^{m-j}p\|\leq B_{j}\|p\|$ for all polynomials $p$. On the other hand,
\[
(m-j)!\binom{m}{j}\, p^{(j)}(\beta) = \frac{d^{m}}{dz^{m}}\left((z-\beta)^{m-j}p(z)\right)\Big\rvert_{z=\beta}.
\]
It then follows that
\[
|p^{(j)}(\beta)| \leq \frac{C_{m}B_{j}}{(m-j)!\binom{m}{j}}\|p\|.
\]
Therefore the map $p\mapsto p^{(j)}(\beta)$ extends to a bounded linear functional on $\h$. 
\end{proof}

\begin{definition}
Let $\beta \in \C$ be reproducible of some order. Define the reproducible order of $\beta$ (in $\h$) as 
\[
\ro(\beta) := \sup_m\{ p \mapsto p^{(m)}(\beta) \textrm{ extends as a bounded functional from $\mathcal{P}$ to $\h$}\}.
\]
\end{definition}

\begin{example}[($D_\alpha$, $\alpha >1$)]\label{D-kernels}
Consider the Dirichlet-type spaces $\mathcal{D}_\alpha$. Recall that monomials have norm $\|z^n\|_\alpha = (n+1)^{\alpha/2}$ and that for $|\beta|<1$,
\[
k_\beta(z) = \sum_{n\ge 0} \frac{\bar{\beta}^n}{(n+1)^{\alpha/2}}\frac{z^n}{(n+1)^{\alpha/2}} = \sum_{n\ge 0}\frac{(\bar{\beta}z)^n}{(n+1)^{\alpha}}.
\]
For $\alpha>1$, it is evident that $k_{\beta}$ is a function in $D_{\alpha}$ even for $|\beta|=1$, which implies that all points on the unit circle are reproducible points.

In addition, it is well known that for $|\beta|<1$, the linear functional given by $f \mapsto f^{(m)}(\beta)$ has a reproducing kernel given by
\[
k_\beta^{(m)}(z) = \frac{\partial^m}{\partial \bar{\beta}^m} k_\beta(z) = \sum_{n\geq m}\frac{n(n-1)\cdots (n-m+1)}{(n+1)^{\alpha}}\bar{\beta}^{n-m}\,z^{n}.
\]
The above series expansion shows that $k_{\beta}^{(m)}$ belongs to $D_{\alpha}$ for some (and hence all) $|\beta|=1$ if and only if $\alpha > 2m+1$. As a consequence, all points on the unit circle are reproducible of order $r$ in $D_{\alpha}$, where $r$ is the largest natural number strictly less than $\frac{\alpha-1}{2}$.
\end{example}

\begin{example}[(Local Dirichlet spaces)]
\label{Ex:localD}
Let $\zeta \in \T$ and let $\delta_\zeta$ denote the Dirac measure at $\zeta$. One may form the \textit{local Dirichlet space at $\zeta$} 
\[
\mathcal{D}_{\delta_\zeta}:= \left\{ f \in \textrm{Hol}(\D) : \int_{\D}|f'(z)|^2 \frac{1 - |z|^2}{|z- \zeta|^2}dA(z) < \infty \right\}
\]
where $dA$ is normalized area measure on $\D$. It is well known that the polynomials are dense in $\mathcal{D}_{\delta_\zeta}$ and that it is a reproducing kernel Hilbert space on $\D$. Additionally, $\mathcal{D}_{\delta_\zeta}$ has the property that point evaluation is bounded at $\zeta \in \T$  but not at any other point on $\T$ (see \cite[Theorems 7.2.1 and 8.1.2 (ii)]{el2014primer}).

\end{example}

\begin{example}[($L$ regions)]
Let $\Delta$ be an infinite sequence of disjoint closed discs whose centers lie on the positive real axis and decrease monotonically to zero. By deleting $\Delta$ from $\D$, one obtains an infinitely connected region, known as an $L$ region (see \cite{zalcman1969bounded,mccarthy1994bounded}). 
The origin is a boundary point of the region, and in \cite{mccarthy1994bounded} it was shown that for certain reproducing kernel Hilbert spaces of analytic functions on the region, where the polynomials are dense, point evaluation is bounded at the origin, dependent on the rate of decay of the radii of the disks in $\Delta$. Uniformly perturbing the disks of $\Delta$ to the right by a fixed positive amount $\epsilon > 0$, we obtain an infinitely connected $L$ like region containing zero, where the results describing bounded point evaluation at the origin hold then for the boundary point $\epsilon$. 
We communicate this example to highlight that we need no hypotheses on the connectedness of $\Omega$ and that there is interesting reproducible behavior in this case.
\end{example}

The previous examples show that spaces with bounded point evaluation can behave very differently, and point evaluation can extend outside of the domain $\Omega$ in various ways. 

As we will see, Shapiro--Shields functions can be viewed as projections of a kernel at zero onto certain shift invariant subspaces generated by polynomials. Consequently, we would like to connect reproducibility with the zeros of a polynomial.
\begin{definition}[(Reproducible zeros)]
Let $p \in \mathcal{P}$. The multiset of reproducible zeros of $p$ (in $\h$) is 
\[
R(p) : = \{ \beta \in \C : p^{(m)}(\beta) = 0 \textrm{ and $\beta$ is reproducible of order $m$ in $\h$} \},
\]
listed with multiplicity. 
\end{definition}
Namely, by multiset and ``listed with multiplicity,'' we require that if $p$ has a zero of order $m$ at $\beta$, then $\beta$ appears in $R(p)$ with multiplicity $\min\{\ro(\beta), m\}$. For example, if the point 1 is reproducible of order 2, but not of order 3 (i.e. $\ro(1) = 2$), and the point $\pi i$ is not reproducible in $\h$, then for $p(z) = z(z - 1)^3(z - \pi i)$, we have $R(p) = \{ 0, 1, 1\}$.
Although the multiset of reproducible zeros of a polynomial depends on $\h$, for convenience, we have not included $\h$ in our notation. However, all use of this notation will be clear. 

We would also like to study the multisets of reproducible points that coincide with reproducible zero multisets of polynomials. The following definition allows us to do that.

\begin{definition}[(Reproducible multiset)] 
A finite multiset $Z$ is a reproducible multiset if it can be written as
\[
Z = \Big\{ \underbrace{0 = \beta_0, \ldots, \beta_0}_{m_0 \textrm{ times}}, \underbrace{\beta_1, \ldots, \beta_1}_{m_1 \textrm{ times}}, \ldots, \underbrace{\beta_s, \ldots, \beta_s}_{m_s \textrm{ times}} \Big\}
\]
where each $\beta_j$ is a distinct reproducible point, $\beta_0 = 0$ appears with multiplicity $m_0$ (possibly zero), and for each $1 \le j \le s$, $\beta_j$ appears with at least multiplicity 1, but no multiplicity higher than its reproducible order, i.e. $1 \le m_j \le \ro(\beta_j)$.
\end{definition}

Note that for any polynomial $p$, $R(p)$ is a reproducible multiset. We can now make a full generalization of the Shapiro--Shields function. 

\begin{definition}[(Shapiro--Shields function)]\label{SS-function}
Let $Z$ be a reproducible multiset and put
\[
Z = \Big\{ \underbrace{0 = \beta_0, \ldots, \beta_0}_{m_0 \textrm{ times}}, \underbrace{\beta_1, \ldots, \beta_1}_{m_1 \textrm{ times}}, \ldots, \underbrace{\beta_s, \ldots, \beta_s}_{m_s \textrm{ times}} \Big\}.
\]
The \textit{Shapiro--Shields function} associated to $Z$ is then defined as
\[
\S_Z = D(k_0^{(m_0)}; k_0^{(m_0 - 1)},\ldots, k_0, k_{\beta_1}^{(m_1 - 1)},\ldots, k_{\beta_1},k_{\beta_s}^{(m_s - 1)},\ldots, k_{\beta_s}).
\]
\end{definition}

It is imperative to note that $\S_Z$ vanishes at each $\beta_j$ with multiplicity $m_j$ when $0 \le j \le s$. We would also like to view a Shapiro--Shields function as an orthogonal projection of a reproducing kernel at the origin onto the orthogonal complement of the span of some other kernels. In order to do this, along with use in later applications, we need the following lemma.

\begin{lemma}\label{L:independence_kernels}
Let $\beta_1,\ldots,\beta_s$ be distinct reproducible points of $\h$.
If $m_1,\ldots,m_s$ are non-negative integers such that $m_j \le \ro(\beta_j)$ for $1 \le j \le s$, then the set
\[
\left\{k_{\beta_j}^{(\ell)}:\quad 0\leq\ell\leq m_j, \ 1\leq j\leq s\right\}
\]
is linearly independent in $\h$.
\end{lemma}
\begin{proof}
Suppose $\{c_{j,\ell}:\ 1\leq j\leq s,\ 0\leq\ell\leq m_j\}$ are complex numbers such that
\[
\sum_{j=1}^{s}\sum_{\ell=0}^{m_j}c_{j,\ell}\,k_{\beta_j}^{(\ell)}=0
\]
in $\h$. We need to prove that $c_{j,\ell}=0$ for all such $j$ and $\ell$. It suffices to show $c_{j,m_j}=0$ for all $1 \leq j\leq s$. Fix such an index $j$. Define the polynomial
\[p(z) = (z-\beta_j)^{m_j}\cdot\prod_{t\neq j}(z-\beta_t)^{m_t+1}.\]
Note that $p^{(\ell)}(\beta_t)=0$ for all $0\leq\ell\leq m_t$ and $1\leq t\leq s$ with $t\neq j$. On the other hand, $p^{(m_j)}(\beta_j)\neq 0$ but
\[p^{(\ell)}(\beta_j)=0\text{ for all } 0\leq \ell < m_j.\]
It follows that
\begin{align*}
    \bar{c}_{j,m_j}p^{(m_j)}(\beta_j) & = \binner{p}{\ \sum_{j=1}^{s}\sum_{\ell=0}^{m_j}c_{j,\ell}\,k_{\beta_j}^{(\ell)}} = 0,
\end{align*}
which forces $c_{j,m_j}=0$ because $p^{(m_j)}(\beta_j)\neq 0$.
\end{proof}

In light of Lemma \ref{SS-proj}, the above result tells us that a Shapiro--Shields function $\S_{Z}$, associated to the reproducible multiset
\[
Z = \Big\{ \underbrace{0 = \beta_0, \ldots, \beta_0}_{m_0 \textrm{ times}}, \underbrace{\beta_1, \ldots, \beta_1}_{m_1 \textrm{ times}}, \ldots, \underbrace{\beta_s, \ldots, \beta_s}_{m_s \textrm{ times}} \Big\},
\]
is a nonzero constant multiple of
the projection of $k_0^{(m_0)}$ onto the orthogonal complement of 
\[
\spn\{k_0^{(m_0 - 1)},\ldots, k_0, k_{\beta_1}^{(m_1 - 1)},\ldots, k_{\beta_1},k_{\beta_s}^{(m_s - 1)},\ldots, k_{\beta_s}\}.
\]

\iffalse
Let $\beta_1,\ldots,\beta_s$ be distinct reproducible points of $\h$. Let $m_1,\ldots,m_s$ be non-negative integers such that each $\beta_j$ has reproducible order at least $m_j$. We claim that the set
\[\big\{k_{\beta_j}^{(\ell)}:\quad 0\leq\ell\leq m_j, \ 1\leq j\leq s\big\}\]
is linearly independent. Toward this, suppose $\{c_{j,\ell}:\ 0\leq j\leq s,\ 0\leq\ell\leq m_j-1\}$ are complex numbers such that
\[\sum_{j=0}^{s}\sum_{\ell=0}^{m_j - 1}c_{j,\ell}\,k_{\beta_j}^{(\ell)}=0\]
in $\h$. We need to prove that $c_{j,\ell}=0$ for all such $j$ and $\ell$. It suffices to show $c_{j,m_j}=0$ for all $0 \leq j\leq s$. Fix such an index $j$. Define the polynomial
\[p(z) = (z-\beta_j)^{m_j}\cdot\prod_{t\neq j}(z-\beta_t)^{m_t+1}.\]
Note that $p^{(\ell)}(\beta_t)=0$ for all $0\leq\ell\leq m_t$ and $1\leq t\leq s$ with $t\neq j$. On the other hand, $p^{(m_j)}(\beta_j)\neq 0$ but
\[p^{(\ell)}(\beta_j)=0\text{ for all } 0\leq \ell < m_j.\]
It follows that
\begin{align*}
    \bar{c}_{j,m_j}p^{(m_j)}(\beta_j) & = \binner{p}{\ \sum_{j=1}^{s}\sum_{\ell=0}^{m_j}c_{j,\ell}\,k_{\beta_j}^{(\ell)}} = 0,
\end{align*}
which forces $c_{j,m_j}=0$ because $p^{(m_j)}(\beta_j)\neq 0$.
\fi
%}

\section{Analogues of Finite Blaschke Products}

In this section, we will show that for $p,q \in \Pol$, $[p] = [q]$ if and only if $R(p) = R(q)$. We will then unify the perspective of Shapiro--Shields functions and  projections of kernels at the origin onto shift-invariant subspaces generated by polynomials, giving \textit{analogues of finite Blaschke products}. 

\subsection{Shift Invariant Subspaces}
Note that $k_0^{(d)} - \Pi_{[p]}(k_0^{(d)}) \perp [p]$, so in order to understand $\Pi_{[p]}(k_0^{(d)})$, it is useful to have a characterization of $[p]^\perp$. We do this first when all the zeros of $p$ are reproducible. 

\begin{proposition}
\label{P:invariant_f}
Let $f \in \Pol$ and suppose that 
\[
R(f) = Z(f) = \Big\{ \underbrace{\beta_1, \ldots, \beta_1}_{r_1 \textrm{ times}}, \underbrace{\beta_2, \ldots, \beta_2}_{r_2 \textrm{ times}}, \ldots, \underbrace{\beta_n, \ldots, \beta_n}_{r_n \textrm{ times}} \Big\}.
\]
Then
\[
[f] = \Big(\spn\big\{k_{\beta_j}^{(\ell)}: 0\leq\ell\leq r_j-1,\ 1\leq j\leq n\big\}\Big)^{\perp}.
\]
\end{proposition}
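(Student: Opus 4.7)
Set $M = \spn\{k_{\beta_j}^{(\ell)} : 0 \le \ell \le r_j - 1,\ 1 \le j \le n\}$ and $d = \deg f = \sum_{j=1}^n r_j$. Since $R(f) = Z(f)$, each $\beta_j$ satisfies $\ro(\beta_j) \ge r_j$, so by Lemma \ref{reprod} every functional $g \mapsto g^{(\ell)}(\beta_j)$ with $0 \le \ell \le r_j - 1$ extends boundedly to $\h$; in particular, $M$ is a well-defined finite-dimensional subspace, and the bounded linear map
\[
\Phi : \h \to \C^d, \qquad \Phi(g) = \bigl(g^{(\ell)}(\beta_j)\bigr)_{1 \le j \le n,\ 0 \le \ell \le r_j - 1},
\]
satisfies $\ker \Phi = M^\perp$.

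The easy inclusion $[f] \subseteq M^\perp$ follows from the observation that, for every $k \ge 0$, the polynomial $z^k f$ vanishes at $\beta_j$ to order at least $r_j > \ell$, so
\[
\langle z^k f, k_{\beta_j}^{(\ell)}\rangle = (z^k f)^{(\ell)}(\beta_j) = 0
\]
for each admissible $(j,\ell)$; closing up gives the inclusion.

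For the reverse inclusion I would run a density-plus-correction argument inside $\Pol$. Let $\Pol_{d-1}$ denote polynomials of degree at most $d - 1$. The restriction $\Phi|_{\Pol_{d-1}} : \Pol_{d-1} \to \C^d$ is injective, because any polynomial in its kernel must be divisible by the degree-$d$ polynomial $\prod_j (z - \beta_j)^{r_j}$. Equality of dimensions makes it a linear isomorphism, and in finite dimensions its inverse is automatically continuous. Given $g \in \ker \Phi$, pick polynomials $p_k \to g$ in $\h$, set $c_k := \Phi(p_k) \to 0$ in $\C^d$, let $\tilde p_k \in \Pol_{d-1}$ be the unique preimage of $c_k$, and put $q_k := p_k - \tilde p_k$. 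Continuity of the inverse forces $\|\tilde p_k\|_\h \to 0$, so $q_k \to g$; and $\Phi(q_k) = 0$ means that as a polynomial $q_k$ is divisible by $\prod_j(z-\beta_j)^{r_j}$, equivalently by $f$, so $q_k \in [f]$. Closedness of $[f]$ then gives $g \in [f]$.

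\textbf{Main obstacle.} The subtle step is the second inclusion: ordinary polynomial approximation of $g$ produces approximants with the wrong boundary-jet values at the $\beta_j$, and the work is to kill those errors with a polynomial correction whose $\h$-norm also tends to zero. The trick is to confine the correction to $\Pol_{d-1}$, where $\Phi$ becomes a linear bijection onto $\C^d$; finite-dimensionality then supplies the needed norm control for free, and divisibility by $f$ inside the polynomials does the rest.
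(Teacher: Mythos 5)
Your proof is correct, and the hard direction is argued by a genuinely different route than the one in the paper. Both proofs share the easy inclusion $[f]\subseteq M^{\perp}$ and both ultimately rest on the polynomial division $\Pol = f\Pol + \Pol_{d-1}$, but they exploit it differently. The paper's proof is a soft dimension count on orthogonal complements: it takes closures in $f\Pol+\Pol_{d-1}=\Pol$, invokes the fact that the sum of a closed subspace and a finite-dimensional subspace is closed to get $\h=[f]+\Pol_{d-1}$, concludes $\dim [f]^{\perp}\leq d$, and then matches this against $\dim M^{\perp\perp}=d$ (which requires knowing the kernel functions are linearly independent). You instead prove $M^{\perp}\subseteq[f]$ directly: you approximate $g\in\ker\Phi$ by polynomials and subtract off the unique correction in $\Pol_{d-1}$ that kills the jet data at the $\beta_j$, with norm control coming for free from finite-dimensionality of $\Pol_{d-1}$ and continuity of $\Phi$. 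In effect your correction argument is a concrete, self-contained substitute for the ``closed plus finite-dimensional is closed'' lemma, and it has the minor advantage of not needing linear independence of the kernel functions as an input (only the injectivity of $\Phi$ on $\Pol_{d-1}$, which you get from divisibility). The paper's version is shorter and reuses the independence fact it establishes elsewhere; yours is more elementary and more explicit about where the limit function's membership in $[f]$ comes from.
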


\begin{proof}
Let $\cM$ denote the right hand-side. Then $\cM$ consists of all functions $h$ in $\h$ for which $h^{(\ell)}(\beta_j)=0$ for all $0\leq \ell \leq r_j-1$ and $1\leq j\leq n$. Note that for each $p\in\Pol$, the polynomial $fp$ belongs to $\cM$. It follows that $f\Pol\subseteq\cM$ and hence $[f]\subseteq\cM$, which implies $\cM^{\perp}\subseteq [f]^{\perp}$. On the other hand, since kernel functions are linearly independent by Lemma \ref{L:independence_kernels}, the space $\cM^{\perp}$ is of dimension $d=r_1+\cdots+r_n$. To prove the equality, we only need to show that the dimension of $[f]^{\perp}$ is at most $d$.

We have $f\Pol + \Pol_{d-1} = \Pol$, where $\Pol_{d-1}$ denotes the space of all polynomials of degree at most $d-1$. Taking closure and using the fact that the sum of a closed subspace with a finite dimensional subspace is closed, we have
\[\h = \cls_{\h}(\Pol) = \cls_{\h}(f\Pol+\Pol_{d-1}) = [f] + \Pol_{d-1}.\]
As a result, the dimension of $[f]^{\perp}$ is at most that of $\Pol_{d-1}$, which is $d$. Therefore, we have $\cM^{\perp}=[f]^{\perp}$, which implies $[f]=\cM$ as required.
\end{proof}

This proposition generalizes \cite[Lemma~4.7]{cheng2019inner} where the authors require the zeros of the polynomial to be contained in $\Omega$ and additional properties are imposed on the space.

We will also show that if $f$ has zeros that are not reproducible, this does not change the structure of $[f]$. First though, we need a proposition. 

\begin{proposition}
\label{P:unbd_eval_der}
 Let $\beta$ be a complex number and $m$ be a non-negative integer. Then the following statements hold.
 \begin{enumerate}[(a)]
    \item $\beta$ is not a reproducible point if and only if $(z-\beta)$ is cyclic.
   \item $\beta$ is a reproducible point with $\ro(\beta)\leq m$ if and only if $(z-\beta)$ is not cyclic and \[\cls_{\h}\big((z-\beta)^{m+2}\Pol\big)=\cls_{\h}\big((z-\beta)^{m+1}\Pol\big).\]
 \end{enumerate}
\end{proposition}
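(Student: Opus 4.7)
The plan is to dualize both claims to assertions about orthogonal complements in $\h$, and then exploit the Taylor expansion of polynomials at $\beta$ together with Lemma \ref{reprod}.

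For (a), I would use the decomposition $p = p(\beta)\cdot 1 + (p - p(\beta))$ with $p - p(\beta) \in (z-\beta)\Pol$. Then any $h \in \h$ orthogonal to $(z-\beta)\Pol$ satisfies $\langle p, h\rangle = p(\beta)\langle 1, h\rangle$ for all $p\in\Pol$; density of $\Pol$ forces $\langle 1, h\rangle \neq 0$ whenever $h \neq 0$, so a nonzero such $h$ witnesses that $p\mapsto p(\beta)$ is bounded on $\Pol$, making $\beta$ reproducible. Conversely, if $\beta$ is reproducible then $k_{\beta}$ is a nonzero element of $((z-\beta)\Pol)^{\perp}$, showing $(z-\beta)\Pol$ is not dense.

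For (b), I would first reduce the equality of closures to a single membership. Writing $p(z) = p(\beta) + (z-\beta)\tilde p(z)$ yields $(z-\beta)^{m+1}\Pol \subseteq W + \C(z-\beta)^{m+1}$, where $W := \cls_\h((z-\beta)^{m+2}\Pol)$. Since a finite-dimensional perturbation of a closed subspace is closed, $\cls_\h((z-\beta)^{m+1}\Pol) = W + \C(z-\beta)^{m+1}$, so the claimed equality holds if and only if $(z-\beta)^{m+1}\in W$. The ``easy'' implication is the contrapositive of the reverse direction: if $\beta$ were reproducible of order $m+1$, then $k_{\beta}^{(m+1)}$ would be orthogonal to $(z-\beta)^{m+2}\Pol$ (and hence to $W$) yet pair with $(z-\beta)^{m+1}$ to give $(m+1)!\neq 0$, forcing $(z-\beta)^{m+1}\notin W$.

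The remaining direction is the main work, via a normalization trick. Assuming $\ro(\beta)\leq m$ and, for contradiction, $(z-\beta)^{m+1}\notin W$, pick $h\perp W$ with $c_{m+1} := \langle(z-\beta)^{m+1},h\rangle/(m+1)! \neq 0$. Taylor expanding at $\beta$ yields $\langle q,h\rangle = \sum_{j=0}^{m+1} c_{j}\,q^{(j)}(\beta)$ for every $q\in\Pol$, with $c_{j}=\langle(z-\beta)^{j},h\rangle/j!$. By Lemma \ref{reprod}, the hypothesis $\ro(\beta)\leq m$ furnishes $k_{\beta}^{(j)}\in\h$ for $0\leq j\leq m$, so $\tilde h := h - \sum_{j=0}^{m}\overline{c_{j}}\,k_{\beta}^{(j)}$ lies in $\h$ and satisfies $\langle q,\tilde h\rangle = c_{m+1}\,q^{(m+1)}(\beta)$ for all $q\in\Pol$. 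This bounds the $(m+1)$-st derivative evaluation at $\beta$ on $\Pol$, contradicting $\ro(\beta)\leq m$. The normalization step is the main obstacle: without subtracting the lower-order kernels, duality alone yields a bound on a linear combination of derivatives at $\beta$ rather than on $q^{(m+1)}(\beta)$ itself, and the existence of those lower-order kernels is precisely what Lemma \ref{reprod} supplies.
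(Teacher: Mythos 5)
Your part (a), your reduction of (b) to the single membership $(z-\beta)^{m+1}\in W$, and the ``easy'' implication of (b) are all correct; together they amount to a clean dualization of the paper's argument, which instead works with the criterion that a nonzero functional is unbounded exactly when its kernel is dense. The gap is in the main direction of (b), at the sentence ``By Lemma \ref{reprod}, the hypothesis $\ro(\beta)\leq m$ furnishes $k_{\beta}^{(j)}\in\h$ for $0\leq j\leq m$.'' Lemma \ref{reprod} points the other way: reproducibility of order $m$ implies reproducibility of all lower orders. The hypothesis $\ro(\beta)\leq m$ gives you the kernels $k_{\beta}^{(j)}$ only for $0\leq j\leq n:=\ro(\beta)$, and when $n<m$ the functions $k_{\beta}^{(n+1)},\dots,k_{\beta}^{(m)}$ simply do not exist, so your $\tilde h$ is undefined. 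As written, your argument is valid only in the boundary case $\ro(\beta)=m$. If you subtract only the kernels that do exist, you are left knowing that $q\mapsto\sum_{j=n+1}^{m+1}c_j\,q^{(j)}(\beta)$ is bounded on $\Pol$, and concluding $c_{m+1}=0$ from this requires a further argument: each individual functional $q\mapsto q^{(j)}(\beta)$ with $j>n$ is unbounded, but that alone does not rule out a bounded nontrivial linear combination.

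The repair is short given your own reduction. First run your argument with $m$ replaced by $n=\ro(\beta)$, where it is sound, to conclude $(z-\beta)^{n+1}\in\cls_{\h}\big((z-\beta)^{n+2}\Pol\big)$. Then apply the bounded operator of multiplication by $(z-\beta)^{m-n}$, which maps closures into closures: this yields $(z-\beta)^{m+1}\in\cls_{\h}\big((z-\beta)^{m+2}\Pol\big)=W$, which is exactly the membership you needed. This propagation from order $n$ up to order $m$ is precisely the inductive step that closes the paper's proof of part (b); it cannot be absorbed into the normalization trick and must appear explicitly.
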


\begin{proof}
For any integer $k\geq 0$, define $\cX_k = (z-\beta)^k\Pol$. It is clear that $\cX_{k+1}\subset\cX_{k}$, which shows that the identity $\cls_{\h}(\cX_{k+1})=\cls_{\h}(\cX_{k})$ holds if and only if $\cX_{k+1}$ is dense in $\cX_{k}$ with respect to the norm induced from $\h$. 

On the other hand, define $\Lambda_{k}:\Pol\rightarrow\C$ by $\Lambda_{k}(p) = p^{(k)}(\beta)$ for $p\in\Pol$.
Observe that
\begin{align*}
    \ker(\Lambda_{k}|_{\cX_k}) 
    & = \Big\{(z-\beta)^{k}q(z):\ q\in\Pol\text{ such that } \Lambda_k\big((z-\beta)^kq(z)\big) = 0\Big\}\\
    & = \Big\{(z-\beta)^{k}q(z):\ q\in\Pol\text{ such that } q(\beta)=0\Big\}\\
    & = \cX_{k+1}.
\end{align*}
It follows from a well-known result in functional analysis (e.g., see Proposition 5.2 and Theorem 5.3 in \cite[Chapter~III]{conway1990course}) that $\Lambda_k|_{\cX_k}$ (being a nonzero functional) is unbounded if and only if $\cX_{k+1}=\ker(\Lambda|_{\cX_k})$ is dense in $\cX_k$. 

Therefore, we have just showed that for any $k\geq 0$, $\cls_{\h}(\cX_{k+1}) = \cls_{\h}(\cX_{k})$ if and only if the linear function $\Lambda_k|_{\cX_k}$ is unbounded.

(a) Since $\cX_0=\Pol$ and $\cX_1=(z-\beta)\Pol$, the function $(z-\beta)$ is cyclic if and only if $\cls_{\h}(\cX_1) = \cls_{\h}(\cX_0)$, which, from the argument above, is equivalent to the fact that $\Lambda_0|_{\cX_0}$ is unbounded. Since $\Lambda_0(h)=h(\beta)$ for all $h\in\Pol$, the unboundedness of $\Lambda_0$ means exactly that $\beta$ is not a reproducible point.

(b) {Suppose first $(z-\beta)$ is not cyclic and $\cls_{\h}(\cX_{m+2})=\cls_{\h}(\cX_{m+1})$. Then $\beta$ is a reproducible point} and the linear functional $\Lambda_{m+1}$ is unbounded on $\cX_{m+1}$, hence, unbounded on $\Pol$. This implies that $\beta$ is not reproducible of order $m+1$. That is, $\ro(\beta)\leq m$.

Let us now prove the converse. Suppose that $\beta$ is a reproducible point and $\ro(\beta)\leq m$. By (a), $(z-\beta)$ is not cyclic. To simplify the notation, define $n=\ro(\beta)$. Then the linear functional $\Lambda_{k}$ is bounded for each $0\leq k\leq n$ but $\Lambda_{n+1}$ is unbounded on $\Pol$. We show that actually $\Lambda_{n+1}|_{\cX_{n+1}}$ is unbounded, which implies that $\cls_{\h}(\cX_{n+2}) = \cls_{\h}(\cX_{n+1})$.

Suppose, for the purpose of obtaining a contradiction, that $\Lambda_{n+1}|_{\cX_{n+1}}$ is bounded. For any $h\in\Pol$, we write
\[h = \sum_{0\leq j\leq n}\frac{h^{(j)}(\beta)}{j!}(z-\beta)^{j} + \tilde{h},\] where $\tilde{h}\in\cX_{n+1}$. Then
\[\Lambda_{n+1}(h) = \Lambda_{n+1}|_{\cX_{n+1}}(\tilde{h})\]
and by triangle inequality,
\begin{align*}
    \|\tilde{h}\| 
    & = \Big\|h -\sum_{0\leq j \leq n}\frac{h^{(j)}(\beta)}{j!}(z-\beta)^{j}\Big\| \leq \|h\| + \sum_{0\leq j\leq n}\frac{|h^{(j)}(\beta)|}{j!}\|(z-\beta)^j\|\\
    & \leq \|h\| + \sum_{0\leq j\leq n}\frac{\|\Lambda_j\|}{j!}\|(z-\beta)^j\|\cdot \|h\|.
\end{align*}
Therefore,
\[|\Lambda_{n+1}(h)| = \|\Lambda_{n+1}|_{\cX_{n+1}}(\tilde{h})\| \leq \|\Lambda_{n+1}|_{\cX_{n+1}}\|\cdot\|\tilde{h}\| \leq C\|h\|,\]
which implies that $\Lambda_{n+1}$ is bounded on $\Pol$, a contradiction. 

We have thus showed that $\cls_{\h}(\cX_{n+2}) = \cls_{\h}(\cX_{n+1})$. Now,
\begin{align*}
    \cls_{\h}(\cX_{n+3})  & = \cls_{\h}\Big((z-\beta)\cdot\cls_{\h}(\cX_{n+2})\Big)\\
    & = \cls_{\h}\big((z-\beta)\cdot\cls_{\h}(\cX_{n+1})\big)\\
    & = \cls_{\h}(\cX_{n+2}).
\end{align*}
It then follows inductively that  $\cls_{\h}(\cX_{m+2}) = \cls_{\h}(\cX_{m+1})$.
\end{proof}

The propositions above allows us to provide a complete description of $[f]$ whenever $f$ is a polynomial. 

\begin{theorem}\label{feqfr}
Let $f \in \mathcal{P}$. For each distinct $\beta_j \in R(f)$, let $r_j$ be the multiplicity of $\beta_j$, i.e.
\[
R(f) = \big\{ \underbrace{\beta_1, \ldots, \beta_1}_{r_1 \textrm{ times}}, \underbrace{\beta_2, \ldots, \beta_2}_{r_2 \textrm{ times}}, \ldots, \underbrace{\beta_n, \ldots, \beta_n}_{r_n \textrm{ times}}\big\}.
\]
Then
\[
[f] = \left[ \prod_{\beta \in R(f)} (z - \beta) \right] =  \Big(\spn\big\{k_{\beta_j}^{(\ell)}: 0 \leq \ell \le r_j - 1,\  1 \leq j \leq n \big\}\Big)^{\perp}.
\]
\end{theorem}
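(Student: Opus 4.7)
The second equality is an immediate application of Proposition \ref{P:invariant_f} to $\tilde f := \prod_{\beta \in R(f)}(z-\beta)$: by construction, $Z(\tilde f) = R(f)$ and every zero of $\tilde f$ occurs with multiplicity within its reproducible order, so $R(\tilde f) = Z(\tilde f)$ and Proposition \ref{P:invariant_f} gives the desired description of $[\tilde f]$. The content of the theorem therefore reduces to the first equality $[f] = [\tilde f]$.

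Write $f = \tilde f \cdot q$ in $\Pol$, so that the linear factors of $q$ are precisely the ``leftover'' linear factors of $f$: one copy of $(z-\alpha)$ for each multiplicity of a non-reproducible zero $\alpha$ of $f$, together with one copy of $(z-\gamma)$ for each unit of excess multiplicity at a reproducible zero $\gamma$ of $f$ beyond the cap in $\tilde f$. The inclusion $[f] \subseteq [\tilde f]$ is immediate from $f \in \tilde f\,\Pol$. To prove the reverse, I peel off the linear factors of $q$ from $f$ one at a time, showing that each removal preserves the invariant subspace $[\cdot]$. Two cases handle the two types of factors. (i) If $(z-\alpha)$ is a factor of $q$ with $\alpha$ non-reproducible, Proposition \ref{P:unbd_eval_der}(a) says $(z-\alpha)$ is cyclic, so there exist $h_n \in \Pol$ with $(z-\alpha)h_n \to 1$ in $\h$. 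Setting $g = f/(z-\alpha)$ and using that multiplication by the polynomial $g$ is bounded on $\h$, we have $f h_n = g(z-\alpha)h_n \to g$, so $g \in [f]$ and hence $[f] = [g]$. (ii) If $(z-\gamma)^m$ divides $f$ with $\gamma$ reproducible and $m \geq \ro(\gamma) + 2$, then Proposition \ref{P:unbd_eval_der}(b) (applied with $j = m-2 \geq \ro(\gamma)$) yields $[(z-\gamma)^m] = [(z-\gamma)^{m-1}]$, so one may choose $h_n \in \Pol$ with $(z-\gamma)^m h_n \to (z-\gamma)^{m-1}$ in $\h$. Setting $g = f/(z-\gamma)^m$ and again invoking boundedness of multiplication by $g$, we conclude $f h_n = g(z-\gamma)^m h_n \to g(z-\gamma)^{m-1} = f/(z-\gamma)$, so $f/(z-\gamma) \in [f]$ and $[f] = [f/(z-\gamma)]$.

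Iterating (i) and (ii) on the linear factors of $q$ reduces $f$ to $\tilde f$ in finitely many steps, with $[\cdot]$ preserved at each stage; an induction on $\deg q$ makes this formal. The main technical point is the lifting used in both cases: each peeling step translates a convergence statement among elements of $\Pol$ into a convergence statement for $f h_n$, which requires multiplication by a fixed polynomial to be a bounded operator on $\h$. This is guaranteed by the standing hypothesis that the shift $S$ is bounded on $\h$, so $M_g = g(S)$ is bounded for every $g \in \Pol$.
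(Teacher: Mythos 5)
Your argument is correct and follows essentially the same route as the paper's: both reduce to Proposition \ref{P:invariant_f} by stripping away non-reproducible zeros and excess multiplicities at reproducible zeros via the two parts of Proposition \ref{P:unbd_eval_der}, the only cosmetic difference being that you unpack the multiplier identity $[g\cdot h]=\cls_{\h}(g\cdot[h])$ into explicit approximating sequences $fh_n\to f/(z-\gamma)$ (peeling one linear factor at a time), whereas the paper cites that identity and removes all excess powers of a given factor in one step. One remark your bookkeeping makes visible: the peeling in your case (ii) necessarily halts at multiplicity $\ro(\gamma)+1$, so the argument closes precisely when the multiplicity of $\gamma$ in $R(f)$ is taken to be $\min\{d,\ro(\gamma)+1\}$ for a zero of order $d$ (the reading given by the formal definition of $R(f)$, and the one under which the paper's own appeal to Proposition \ref{P:unbd_eval_der} is valid as well).
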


\begin{proof}
We first recall the fact that for any multipliers $g,h$ of $\h$, we have
\[
[g\cdot h] = \cls_{\h}(g\cdot[h]).
\]
If $\beta$ is a non-reproducible zero of $f$, then Proposition \ref{P:unbd_eval_der} gives $[(z-\beta)]=\h$. Applying the above identity with $h=z-\beta$ and $g = f/(z-\beta)$, we conclude that $[f]=[f/(z-\beta)]$. So it suffices to consider only the zeros of $f$ with some reproducible order. Put $f(z) = p(z) \prod_{j = 1}^n (z - \beta_j)^{d_j}$, each $\beta_j$ distinct with $d_j \ge r_j$, $1 \le j \le n$, and with the zeros of $p \in \Pol$ being all of the non-reproducible zeros of $f$ (i.e. $p$ is cyclic). Then $[f] = [f/p]$. So without loss of generality, we may assume that $p(z)$ is identically one. Let $h(z) = (z - \beta_1)^{d_1}$. Then by Lemma \ref{P:unbd_eval_der}, we have $[h] = [(z - \beta_1)^{d_1}] = [(z - \beta_1)^{r_1}]$. Letting $g = f/h$, we have 
\begin{align*}
    [f] & = [g\cdot h] = \cls_{\h}(g\cdot[h])\\
& = \cls_{\h}\big(g\cdot[(z-\beta_1)^{d_1}]\big) = [g\cdot (z-\beta_1)^{r_1}]\\
& = \left[\frac{f}{(z-\beta_1)^{d_1 - r_1}}\right].
\end{align*}
Repeating this argument for each $\beta_j$, $2 \le j \le n$, we have 
\begin{align*}
    [f] & = \left[\frac{f}{\prod_{j=1}^{n}(z - \beta_j)^{d_j - r_j}}\right] = \Big[\prod_{j=1}^{n}(z-\beta_j)^{r_j}\Big]  = \left[ \prod_{\beta \in R(f)} (z - \beta) \right]. 
\end{align*}
Applying Proposition \ref{P:invariant_f} then gives the result. 
\end{proof}

This theorem, to which was previously alluded, is a generalization of a result proved for the $H^2_w$ spaces by the first author in \cite[Theorem~5.4]{felder2020general}. Let us illustrate this theorem by applying it to an example in various spaces.  

\begin{example}
Let $f(z)=z^2(z-\frac{i}{2})(z^2-1)^2$. Then the zero multiset of $f$ is
\[Z(f) = \Big\{0,0,\frac{i}{2}, -1, -1, 1, 1\Big\}.\]
Interestingly, by Theorem \ref{feqfr}, the shift-invariant subspace $[f]$ depends greatly on the underlying Hilbert space.

If $\h=D_{\alpha}$ for $\alpha\leq 1$ (which includes the Hardy, Bergman and Dirichlet spaces), then $R(f)=\{0,0,i/2\}$ and
\[[f] = \Big(\spn\big\{k_{0}, k_{0}^{(1)}, k_{i/2}\big\}\Big)^{\perp}.\]
If $\h=D_{\alpha}$ for $1<\alpha\leq 3$, then by Example \ref{D-kernels}, we have $R(f)=\{0,0,i/2,-1,1\}$, which then implies that
\[[f] =\Big(\spn\big\{k_{0}, k_{0}^{(1)}, k_{i/2}, k_{-1}, k_{1}\big\}\Big)^{\perp}.\]
If $\h=D_{\alpha}$ for $\alpha>3$, then by Example \ref{D-kernels} again, $R(f)=Z(f)$ and so
\[[f] = \Big(\spn\big\{k_{0}, k_{0}^{(1)}, k_{i/2}, k_{-1}, k_{-1}^{(1)}, k_{1}, k_{1}^{(1)}\big\}\Big)^{\perp}.\]
On the other hand, if $\h = \mathcal{D}_{\delta_{1}}$, the local Dirichlet space at $1$, then by Example \ref{Ex:localD}, we have $R(f)=\{0,0,i/2,1\}$ and hence,
\[[f] = \Big(\spn\big\{k_{0}, k_{0}^{(1)}, k_{i/2}, k_{1}\big\}\Big)^{\perp}.\]
\end{example}

Theorem \ref{feqfr} also has an immediate and useful corollary. 
\begin{corollary}\label{equal-perp}
Let $p, q \in \Pol$. Then $[p] = [q]$ if and only if $R(p) = R(q)$. 
\end{corollary}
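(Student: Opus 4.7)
The plan is a direct deduction from Theorem \ref{feqfr}, which identifies $[f]$ for any polynomial $f$ as the orthogonal complement of the explicit span of kernel derivatives
\[
\spn\big\{k_{\beta_j}^{(\ell)} : 0 \le \ell \le r_j - 1,\ 1 \le j \le n\big\}
\]
determined by $R(f) = \{\beta_j \text{ with multiplicity } r_j\}$. Both implications of the corollary will fall out of this characterization once the multiset bookkeeping is handled.

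For the forward direction, if $R(p) = R(q)$ then Theorem \ref{feqfr} produces the same span of kernel derivatives for both $p$ and $q$, so the orthogonal complements coincide and $[p] = [q]$ follows immediately.

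For the reverse direction, assume $[p] = [q]$ and write $R(q) = \{\gamma_j \text{ with multiplicity } s_j : 1 \le j \le m\}$. Since $p \in [p] = [q]$, Theorem \ref{feqfr} forces $p \perp k_{\gamma_j}^{(\ell)}$ for every $0 \le \ell \le s_j - 1$ and $1 \le j \le m$, which by the reproducing property gives $p^{(\ell)}(\gamma_j) = 0$. Hence each $\gamma_j$ is a zero of $p$ of multiplicity at least $s_j$. Because the reproducibility of $\gamma_j$ to the order required by its appearance in $R(q)$ is a property of the point in $\h$ (not of the polynomial), the definition of $R(\cdot)$ then forces $\gamma_j$ to appear in $R(p)$ with multiplicity at least $s_j$. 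Thus $R(q) \subseteq R(p)$ as multisets; swapping the roles of $p$ and $q$ yields the reverse containment, and equality follows.

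I do not foresee any serious obstacle, since the corollary is essentially a consequence of Theorem \ref{feqfr} plus linear independence of the relevant kernel derivatives. The only step requiring mild care is verifying that the inherited vanishing conditions genuinely translate into the correct multiplicities in $R(p)$; this is immediate because the reproducibility of each $\gamma_j$ to the appropriate order is witnessed by its presence in $R(q)$ and so imposes no extra burden on $p$.
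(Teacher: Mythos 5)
Your proof is correct and follows essentially the same route as the paper: both directions are read off from Theorem \ref{feqfr}, with the nontrivial direction obtained by pairing the polynomial against the kernel derivatives that span $[q]^\perp=[p]^\perp$ and using the reproducing property to transfer vanishing conditions. The only cosmetic difference is that you argue by direct double inclusion of the multisets while the paper argues by contradiction at a single discrepancy point; your multiplicity bookkeeping (in particular that $s_j\le\ro(\gamma_j)$ is witnessed by $R(q)$ itself) is handled correctly.
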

\begin{proof}
The backward implication is given directly by Theorem \ref{feqfr}.
So let $[p] = [q]$ and suppose for contradiction that $R(p) \neq R(q)$. WLOG, there exists $\beta \in R(p)$ with $\beta \notin R(q)$ or $\beta$ having greater multiplicity in $R(p)$ than in $R(q)$. In either case, Theorem \ref{feqfr} implies there is some $n \ge 0$ so that $k_\beta^{(n)} \perp [p] = [q]$. But then $\langle p, k_\beta^{(n)} \rangle = \langle q, k_\beta^{(n)} \rangle = 0$, which is a contradiction, since $\beta \notin R(q)$ or $\beta$ has multiplicity strictly less than $n+1$ in $R(q)$. 
\end{proof}

\subsection{Inner Functions and Linear Combinations of Kernels}
We now show that each inner function that arises as a certain linear combination of reproducing kernels can be identified with a shift invariant subspace and a Shapiro--Shields function. The following theorem generalizes a result of the second author in \cite[Theorem~3.7]{LeAMP2020}, proved initially in the $H^2_w$ spaces. The significance of our contribution is that we do not require monomials be orthogonal and make almost no geometric assumptions on the underlying set for which $\h$ is an RKHS, providing a very general setting for which these results hold. When $\h = H^2$, this result describes classical Blaschke products and in general gives our \textit{analogues of finite Blaschke products}. 

\begin{theorem}\label{modified_le}
Suppose that 
\[
B = \sum_{j=0}^{s}\sum_{\ell=0}^{m_{j}}c_{j,\ell}\,k_{\lambda_j}^{(\ell)}
\]
is an $\h$-inner function, with $c_{j,m_{j}}\neq 0$.
Then $B$ is a constant multiple of $\Pi_{[f]}(k_0^{(d)})$ for some $d$ and some polynomial $f$. Further, $B$ must also be a Shapiro--Shields function. 
\end{theorem}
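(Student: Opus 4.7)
The plan is to extract the zero structure of $B$ directly from the inner relation $\inner{pB}{B}=p(0)\|B\|^{2}$ (valid for every $p\in\Pol$), pin down an explicit polynomial $f$ so that $B$ lies in the one-dimensional wandering subspace $[f]\ominus[Sf]$, and finally rewrite this projection as a Gram determinant matching the definition of the Shapiro--Shields function. Without loss of generality the $\lambda_{j}$ are distinct. Substituting $B=\sum c_{j,\ell}k_{\lambda_{j}}^{(\ell)}$ and expanding $(pB)^{(\ell)}(\lambda_{j})$ via Leibniz rewrites the left-hand side as $\sum_{j,a}A_{j,a}\,p^{(a)}(\lambda_{j})$, with
\[
A_{j,a}:=\sum_{\ell\geq a}\overline{c_{j,\ell}}\binom{\ell}{a}B^{(\ell-a)}(\lambda_{j}).
\]
The functionals $p\mapsto p^{(a)}(\lambda_{j})$ are linearly independent on $\Pol$ across distinct $(\lambda_{j},a)$, so matching against $\|B\|^{2}p(0)$ forces $0\in\{\lambda_{j}\}$ (otherwise $p(0)$ has no match and $\|B\|^{2}=0$); WLOG $\lambda_{0}=0$, and we obtain $A_{0,0}=\|B\|^{2}$ with $A_{j,a}=0$ for every other admissible $(j,a)$.

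I then run a downward induction on $a$ starting from $a=m_{j}$. At the top step $A_{j,m_{j}}=\overline{c_{j,m_{j}}}B(\lambda_{j})=0$ (for $j\geq 1$) gives $B(\lambda_{j})=0$ because $c_{j,m_{j}}\neq 0$; the resulting vanishing collapses $A_{j,m_{j}-1}$ to $\overline{c_{j,m_{j}}}\,m_{j}B'(\lambda_{j})=0$, and continuing in the same way yields $B^{(k)}(\lambda_{j})=0$ for $0\leq k\leq m_{j}$ and $j\geq 1$. The same scheme at $\lambda_{0}=0$ gives $B^{(k)}(0)=0$ for $0\leq k<m_{0}$ and $\overline{c_{0,m_{0}}}B^{(m_{0})}(0)=\|B\|^{2}\neq 0$; in particular $\ord_{0}(B)=m_{0}$, so set $d=m_{0}$.

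Now define $f(z):=z^{m_{0}}\prod_{j=1}^{s}(z-\lambda_{j})^{m_{j}+1}$. Each $\lambda_{j}$ is reproducible of order at least $m_{j}$ since $k_{\lambda_{j}}^{(m_{j})}\in\h$, so Theorem~\ref{feqfr} together with Proposition~\ref{P:invariant_f} identifies
\[
[f]^{\perp}=\spn\bigl(\{k_{0}^{(\ell)}:0\leq\ell\leq m_{0}-1\}\cup\{k_{\lambda_{j}}^{(\ell)}:0\leq\ell\leq m_{j},\ 1\leq j\leq s\}\bigr).
\]
The vanishing orders just derived say exactly that $B$ annihilates every kernel in this span, so $B\in[f]$. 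A parallel computation gives $\inner{B}{z^{k}f}=\sum c_{j,\ell}\,\overline{(z^{k}f)^{(\ell)}(\lambda_{j})}=0$ for every $k\geq 1$: $z^{k}f$ vanishes at each $\lambda_{j}$ ($j\geq 1$) to order $m_{j}+1$ and at $0$ to order $k+m_{0}>m_{0}$, killing every term. Hence $B\in[f]\ominus[Sf]$. Applying the same identification to $[Sf]^{\perp}$ (which corresponds to the polynomial $z\,f=z^{m_{0}+1}\prod_{j}(z-\lambda_{j})^{m_{j}+1}$) picks up exactly the one additional kernel $k_{0}^{(m_{0})}$, so $\dim\bigl([f]\ominus[Sf]\bigr)=\dim[Sf]^{\perp}-\dim[f]^{\perp}=1$, and the proposition just preceding Subsection~\ref{Gram} supplies an explicit generator, namely $\Pi_{[f]}(k_{0}^{(m_{0})})$. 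Therefore $B=c\,\Pi_{[f]}(k_{0}^{(m_{0})})$ for some nonzero $c$.

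Finally, Lemma~\ref{SS-proj} applied to $u=k_{0}^{(m_{0})}$ with the above kernel basis for $[f]^{\perp}$ (ordered as $k_{0}^{(m_{0}-1)},\ldots,k_{0},k_{\lambda_{1}}^{(m_{1})},\ldots,k_{\lambda_{1}},\ldots,k_{\lambda_{s}}^{(m_{s})},\ldots,k_{\lambda_{s}}$) rewrites $\Pi_{[f]}(k_{0}^{(m_{0})})$ as $D(k_{0}^{(m_{0})};k_{0}^{(m_{0}-1)},\ldots,k_{0},k_{\lambda_{1}}^{(m_{1})},\ldots,k_{\lambda_{s}})/\det G$, which is a nonzero scalar multiple of the Shapiro--Shields function $\S_{Z}$ for $Z=\{0^{m_{0}},\lambda_{1}^{m_{1}+1},\ldots,\lambda_{s}^{m_{s}+1}\}$. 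The main technical obstacle is the bookkeeping in the downward induction: one must carefully exploit the triangular structure of $A_{j,a}$ so that at every stage only the known-nonzero top coefficient $c_{j,m_{j}}$ survives.
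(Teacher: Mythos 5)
Your proposal is correct and follows essentially the same route as the paper: expand $\inner{pB}{B}=p(0)\|B\|^2$ via Leibniz, exploit the triangular structure in the top coefficients $c_{j,m_j}$ to deduce that $B$ vanishes to order $m_0$ at $0$ and to order $m_j+1$ at each $\lambda_j\neq 0$, identify $[f]$ for $f(z)=z^{m_0}\prod_j(z-\lambda_j)^{m_j+1}$ via Proposition \ref{P:invariant_f}, and invoke Lemma \ref{SS-proj}. The only cosmetic difference is that you pass through the one-dimensional wandering subspace $[f]\ominus[Sf]$, whereas the paper concludes directly that $B=c_{0,m_0}\Pi_{\mathcal{M}^{\perp}}(k_0^{(m_0)})$ from the fact that $B$ is a linear combination of $k_0^{(m_0)}$ and the kernels spanning $\mathcal{M}$ while being orthogonal to $\mathcal{M}$.
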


The function $B$ here is what we call an \textit{analogue of a finite Blaschke product}.

\begin{proof}
Take $B$ as above and without loss of generality, assume that $\|B\|=1$. For any $g \in \Pol$,
\begin{align*}
\inner{g}{k_0} & = g(0) = \inner{gB}{B} = \sum_{j=0}^{s}\sum_{\ell=0}^{m_{j}}\bar{c}_{j,\ell}\inner{gB}{k_{\lambda_j}^{(\ell)}}\\
& = \sum_{j=0}^{s}\sum_{\ell=0}^{m_{j}}\bar{c}_{j,\ell}\sum_{m=0}^{\ell}\binom{\ell}{m}g^{(m)}(\lambda_j)\cdot B^{(\ell-m)}(\lambda_j)\\
    & = \sum_{j=0}^{s}\sum_{m=0}^{m_j}\Big\{\sum_{\ell=m}^{m_j}\bar{c}_{j,\ell}\binom{\ell}{m}B^{(\ell-m)}(\lambda_j)\Big\}g^{(m)}(\lambda_j)\\
    & = \sum_{j=0}^{s}\sum_{m=0}^{m_j}\Big\{\sum_{\ell=m}^{m_j}\bar{c}_{j,\ell}\binom{\ell}{m}B^{(\ell-m)}(\lambda_j)\Big\}\inner{g}{k_{\lambda_j}^{(m)}}.
\end{align*}
Since the set of polynomials is dense in $\h$, we conclude that
\[k_0 = \sum_{j=0}^{s}\sum_{m=0}^{m_j}\Big\{\sum_{\ell=m}^{m_j}{c}_{j,\ell}\binom{\ell}{m}\overline{B^{(\ell-m)}(\lambda_j)}\Big\}k_{\lambda_j}^{(m)}.\]
It then follows from Lemma \ref{L:independence_kernels} that $0\in\{\lambda_0,\ldots,\lambda_s\}$ and for all $j$ and $m$,
\begin{align}
\label{Eqn:inner_condition_B}
    \sum_{\ell=m}^{m_j}\bar{c}_{j,\ell}\binom{\ell}{m}B^{(\ell-m)}(\lambda_j) = \begin{cases}
    0, & \text{ if } \lambda_j\neq 0 \ \text{ or }\ m\geq 1,\\
    1, & \text{ if } \lambda_j = 0 \ \text{ and }\ m=0.
    \end{cases}
\end{align}
Without loss of generality, we shall always assume that $\lambda_0=0$. Then for $j=0$ and $0\leq m\leq m_0$, equation \eqref{Eqn:inner_condition_B} gives
\[\sum_{\ell=m}^{m_0}\bar{c}_{0,\ell}\binom{\ell}{m}B^{(\ell-m)}(0) = 
\begin{cases}
1, & \text{ if } m=0,\\
0, & \text{ if } m\geq 1.
\end{cases}
\]
Since $c_{0,m_0}\neq 0$, we conclude that either $m_0=0$, or $m_0\geq 1$ and $B^{(\ell)}(0)=0$ for all $0\leq\ell\leq m_0-1$. That is, $B\perp\{k_{0}^{(\ell)}: 0\leq\ell\leq m_0-1\}$. 

On the other hand, for $j\neq 0$ and $0\leq m\leq m_j$, by equation \eqref{Eqn:inner_condition_B},
\[\sum_{\ell=m}^{m_j}\bar{c}_{j,\ell}\binom{\ell}{m}B^{(\ell-m)}(\lambda_j) = 0.\]
Since $c_{j,m_j}\neq 0$, it follows that $B^{(\ell)}(\lambda_j)=0$ for all $0\leq\ell\leq m_j$. That is, $B\perp\{k_{\lambda_j}^{(\ell)}: 0\leq\ell\leq m_j\}$. Let $\mathcal{M}$ be the subspace spanned by the functions
\[\big\{k_{0}^{(\ell)}: 0\leq\ell\leq m_0-1\big\}\cup\big\{k_{\lambda_j}^{(\ell)}: 1\leq j\leq s, 0\leq\ell\leq m_j\big\},\] where the first set is considered to be empty if $m_0=0$. Then we have $B = c_{0,m_0}\Pi_{\mathcal{M}^{\perp}}(k_{0}^{(m_0)})$. By Proposition \ref{P:invariant_f}, $\mathcal{M}^{\perp}$ can be recognized as $[f]$, where \[f(z)=z^{m_0}\prod_{j=1}^{s}(z-\lambda_j)^{m_j+1}.\]
As a result, we have shown that $B$ is a projection of $k_0^{(d)}$ onto $[f]$ for some $d$ and some polynomial $f$. Further, by Lemma \ref{SS-proj}, we know that $B$ must also equal the Shapiro--Shields function $\S_{Z(f)}$. 
\end{proof}

\begin{remark}
Recall that finite Blaschke products are the only rational inner functions on the Hardy space (e.g. see \cite[Section~3.5]{GarciaSC2018}). In fact, since any rational function with poles outside of the closed unit disk can be written as a linear combination of $H^2$-kernel functions, this also follows from our Theorem \ref{feqfr}. 

We can also apply our result to investigate rational inner functions on the Bergman space $A^2$. Suppose $\lambda_1,\ldots,\lambda_s$ are distinct nonzero points on the open unit disk and a function of the form
\[B(z) = c_0 + \sum_{j=1}^{s}c_{j}k_{\lambda_j} = c_0 + \sum_{j=1}^{s}\frac{c_j}{(1-\overline{\lambda}_jz)^2}\]
is $A^2$-inner. Theorem \ref{feqfr} shows that $B$ must vanish at all these points. Therefore, there exists a polynomial $q$ of degree at most $s$ such that
\[B(z) = \frac{q(z)\prod_{j=1}^{s}(z-\lambda_j)}{\prod_{j=1}^{s}(1-\overline{\lambda}_jz)^2}.\qquad \]
On the other hand, it follows from the formula for $B$ as a linear combination of Bergman kernels, all residues of $B$ at $1/\overline{\lambda}_1,\ldots, 1/\overline{\lambda}_s$ are zero. As a consequence, $q$ is uniquely determined (up to a constant) from this condition. This approach provides a different way to construct analogues of finite Blaschke products in $A^2$, which does not involve determinants. The above argument can be adapted for the case of repeated points. 
\end{remark}

{In some spaces, we can also deduce when an inner function is an analogue of a finite Blaschke product based on its analyticity. In particular, we can do so when any function that is analytic and non-vanishing on a neighborhood of $\overline{\Omega}$ is also cyclic in $\h$. This assumption is true for all weighted Hardy spaces $H^2_{w}$, including the Hardy, Bergman and Dirichlet-type spaces.}

\begin{corollary}
\label{C:rational_inner_functions}
Assume that any function that is analytic and non-vanishing on a neighborhood of $\overline{\Omega}$ is cyclic in $\h$. Let $f$ be an $\h$-inner function that is analytic on a neighborhood of $\overline{\Omega}$. Then $f$ must be a linear combination of kernel functions and hence, $f$ is determined by Theorem \ref{modified_le}. In particular, all rational $A^2$-inner functions are Shapiro--Shields functions.
\end{corollary}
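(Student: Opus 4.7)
The plan is to factor $f=gp$ with $g$ cyclic and $p$ a polynomial, to deduce $[f]=[p]$, and then to combine the kernel description of $[p]^{\perp}$ from Theorem~\ref{feqfr} with the characterization of inner functions as projections of $k_0^{(d)}$ onto $[f]$. To set up the factorization, note that since $f$ is analytic on a neighborhood $U$ of $\overline{\Omega}$ and is non-zero, compactness of $\overline{\Omega}$ forces $f$ to have only finitely many zeros in $\overline{\Omega}$; call them $\lambda_1,\ldots,\lambda_s$ with multiplicities $n_1,\ldots,n_s$, and set $p(z)=\prod_{j=1}^{s}(z-\lambda_j)^{n_j}$. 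After shrinking $U$ so that $f$ has no other zeros in $U$, the quotient $g:=f/p$ is analytic and non-vanishing on a neighborhood of $\overline{\Omega}$, hence by hypothesis is cyclic in $\h$ (and in particular $g\in\h$).

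The key step is the equality $[f]=[p]$. For $[p]\subseteq[f]$, cyclicity of $g$ provides polynomials $q_n$ with $q_n g\to 1$ in $\h$; since multiplication by the polynomial $p$ is bounded on $\h$ (as $S$ is bounded), $q_n f = q_n g p \to p$, so $p\in[f]$, and since $[f]$ is shift-invariant and closed this forces $[p]\subseteq[f]$. For $[f]\subseteq[p]$, density of $\Pol$ in $\h$ yields polynomials $r_n\to g$, whence $r_n p\to gp=f$ by boundedness of multiplication by $p$, so $f\in[p]$. Consequently $[f]^{\perp}=[p]^{\perp}$, which by Theorem~\ref{feqfr} is the span of finitely many kernels $k_{\beta}^{(\ell)}$ at the reproducible zeros of $p$. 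Since $f$ is $\h$-inner with $d=\ord_0(f)$, the proposition preceding Theorem~\ref{modified_le} gives
\[
f \;=\; c\,\Pi_{[f]}(k_0^{(d)}) \;=\; c\bigl(k_0^{(d)}-\Pi_{[f]^{\perp}}(k_0^{(d)})\bigr)
\]
for some nonzero constant $c$, and the right-hand side is visibly a linear combination of kernels and their derivatives. Theorem~\ref{modified_le} then forces $f$ to be a Shapiro--Shields function.

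For the $A^2$ statement, a rational function in $A^2$ can have no poles on $\overline{\D}$: poles in $\D$ are forbidden by analyticity, while a pole of order $n\ge 1$ on $\T$ would make the local area integral of $|f|^2\sim|z-z_0|^{-2n}$ diverge. Hence such an $f$ is analytic on a neighborhood of $\overline{\D}$ and the main statement applies. The chief obstacle in the argument is establishing $[f]=[p]$: one needs $g$ to be a genuine cyclic element of $\h$ (not merely an analytic, non-vanishing germ near $\overline{\Omega}$) and polynomial multipliers to act boundedly, both of which are delivered by the standing hypotheses, after which the identification with a Shapiro--Shields function is automatic.
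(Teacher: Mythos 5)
Your proof is correct and follows essentially the same route as the paper: factor $f = pg$ with $g$ cyclic (analytic and non-vanishing near $\overline{\Omega}$), use cyclicity to identify the relevant shift-invariant subspace with one generated by a polynomial, and invoke Theorem~\ref{feqfr} to express its orthogonal complement as a finite span of kernel functions, after which Theorem~\ref{modified_le} applies. The only cosmetic difference is that the paper works directly with $f \perp [Sf] = [zp]$, whereas you establish $[f] = [p]$ and pass through the characterization $f = c\,\Pi_{[f]}(k_0^{(d)})$; both arguments land $f$ in the same finite-dimensional span of kernels.
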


\begin{proof}
Write $f(z) = p(z)g(z)$, where $p$ is a polynomial and $g$ is analytic and non-vanishing on a neighborhood of $\overline{\Omega}$. By the definition of inner functions, we see that $f$ belongs to the orthogonal complement of $[Sf] = [qg]$, where $q(z)=zp(z)$. Since $g$ is cyclic, it follows that $[Sf]=[q]$. On the other hand, Theorem \ref{feqfr} shows that $[q]^{\perp}$ is spanned by a finite collection of kernel functions. As a result, $f$ is of the form described in Theorem \ref{modified_le}. 
\end{proof}

{We note in the case of the Bergman space, this corollary can be deduced from the characterization of finite zero $A^2$-inner functions due to Duren, Khavinson, and Shapiro \cite{MR1398090}, along with formulae of Stessin and MacGregor \cite[Section~2]{macgregor1994weighted}.}

{We now present another application involving the analyticity of inner functions. Consider a weighted Hardy space $H^2_{w}$, where $w=\{w_k\}_{k\geq 0}$ consists of positive real numbers with \[\lim_{k\to\infty}\frac{w_k}{w_{k+1}}=1.\] In a recent paper, Cobos and Seco \cite{CobosAM2020} showed that under several additional assumptions, the only entire inner functions on $H^2_w$ are multiples of monomials. It turns out that this result remains true on \textit{all} $H^2_w$, as we show here.

We first discuss the regularity of reproducing kernels on $H^2_w$. Recall that if $\beta$ is a reproducible point of order $m$ for $H^2_w$, then the linear functional $f\mapsto f^{(m)}(\beta)$ is given by the reproducing kernel
\[k_{\beta}^{(m)}(z) = \sum_{n\geq m}\frac{n(n-1)\cdots (n-m+1)}{w_n}\bar{\beta}^{n-m}\,z^n.\]
In the case $\beta\neq 0$, we may write
\[k_{\beta}^{(m)}(z) = \sum_{n=0}^{\infty}Q_{\beta,m}(n)\frac{(\bar{\beta}z)^n}{w_n},\] where $Q_{\beta,m}$ is a polynomial of degree $m$ with coefficients depending on $m$ and $\beta$. Due to our assumption on $w$, it is evident that the radius of convergence of $k_{\beta}^{(m)}$ is equal to $1/|\bar{\beta}|$. More generally, for any nonzero points $\beta_1,\ldots,\beta_s$ and nonzero polynomials $Q_1,\ldots,Q_s$, the radius of convergence of the power series
\[\sum_{n=0}^{\infty}\Big(Q_1(n)\bar{\beta}_1^n + \cdots + Q_s(n)\bar{\beta}_s^{n}\Big)\frac{z^n}{w_n}\] is equal to $\min\big\{1/|\bar{\beta}_j|: 1\leq j\leq s\big\}$. To see this, note that with $r=\max\{|\bar{\beta}_j|: 1\leq j\leq s\}$, for certain values of $\delta$, the series
\[\sum_{n=0}^{\infty}\Big(Q_1(n)\bar{\beta}_1^n+\cdots+Q_s(n)\bar{\beta}_s^n\Big)\frac{r^{-n}}{n^{\delta}}\] does not converge, while it does converge for sufficiently large $\delta$. Therefore, \[\limsup_{n\to\infty}\Big|Q_1(n)\bar{\beta}_1^n+\cdots+Q_s(n)\bar{\beta}_s^n\Big|^{1/n} = r.\]
It follows from the discussion above that nonzero linear combination of kernel functions of $H^2_{w}$ cannot be entire unless it is a linear combination of kernels at the origin, hence, is a polynomial. We are now ready to prove a generalization of Cobos and Seco's result.
\begin{proposition}
\label{P:entire_inner}
The only entire $H^2_w$-inner functions are constant multiples of monomials.
\end{proposition}

\begin{proof}
Suppose that $f$ is an $H^2_w$-inner function which is also entire. By Corollary \ref{C:rational_inner_functions}, $f$ is a linear combination of kernel functions and hence, a polynomial. On the other hand, it is immediate from the definition that any $H^2_w$-inner polynomial must be a constant multiple of a monomial. This completes the proof of the proposition.
\end{proof}
}

We now turn back to the general setting, where we can also provide a description of the various ways in which we view analogues of finite Blaschke products. 
\begin{prop}\label{projections}
Let $f \in \mathcal{P}$ and write
\[
R(f) = \Big\{\underbrace{0 = \beta_0, \ldots, \beta_0}_{r_0 \textrm{ times}}, \underbrace{\beta_1, \ldots, \beta_1}_{r_1 \textrm{ times}}, \ldots, \underbrace{\beta_n, \ldots, \beta_n}_{r_n \textrm{ times}} \Big\}
\]
and let $\tilde{f}(z) = \prod_{\beta \in R(f)} (z - \beta)$. 
Then, up to a constant, the following are equal:
\begin{enumerate}
\item $\Pi_{[f]}(k_0^{(r_0)})$
\item $\Pi_{[\tilde{f}]}(k_0^{(r_0)})$
\item The Shapiro--Shields function $\S_{R(f)}$, associated to the reproducible zeros of $f$. 
\item $\varphi := k_0^{(r_0)} - \sum_{j = 0}^{n} \sum_{\ell=0}^{r_j-1} c_{\ell,j}k^{(\ell)}_{\beta_j}$, where the constants $c_{\ell,j}$ are given by $\langle \varphi, k_{\beta_j}^{(\ell)} \rangle = 0$ for $0 \le \ell \le r_j-1$ and $0 \le j \le n$. 
\end{enumerate}

\end{prop}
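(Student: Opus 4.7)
The plan is to recognize that all four objects describe the same orthogonal projection of $k_0^{(r_0)}$ onto the shift-invariant subspace $[\tilde{f}]$, with (3) differing only by the scalar $\det(G(\cdot))$. The whole proof is largely a bookkeeping exercise that assembles Corollary \ref{equal-perp}, Proposition \ref{P:invariant_f}, and Lemma \ref{SS-proj}.

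First, to identify (1) with (2), observe that $\tilde{f}$ is, by construction, a polynomial whose zero multiset equals $R(f)$; in particular $R(\tilde{f}) = R(f)$. Corollary \ref{equal-perp} then gives $[f] = [\tilde{f}]$, so $\Pi_{[f]}(k_0^{(r_0)}) = \Pi_{[\tilde{f}]}(k_0^{(r_0)})$ exactly (not merely up to a constant).

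Next, to identify (2) with (4), I would apply Proposition \ref{P:invariant_f} to $\tilde{f}$. Because every zero of $\tilde{f}$ is by construction a reproducible point with appropriate order, the hypothesis $R(\tilde f)=Z(\tilde f)$ is satisfied, giving
\[
[\tilde{f}]^{\perp} \;=\; \spn\big\{k_{\beta_j}^{(\ell)} : 0\leq \ell\leq r_j-1,\ 0\leq j\leq n\big\}.
\]
Write the orthogonal decomposition $k_0^{(r_0)} = \Pi_{[\tilde f]}(k_0^{(r_0)}) + w$ with $w\in[\tilde f]^{\perp}$. Then $w = \sum_{j,\ell} c_{\ell,j}\,k_{\beta_j}^{(\ell)}$ for some scalars, and $\varphi = k_0^{(r_0)} - w = \Pi_{[\tilde f]}(k_0^{(r_0)})$ is orthogonal to each $k_{\beta_j}^{(\ell)}$ in the spanning set, which is exactly the orthogonality system defining (4). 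The constants $c_{\ell,j}$ are uniquely determined because the kernel functions $k_{\beta_j}^{(\ell)}$ are linearly independent (noted in the paragraph following Definition \ref{SS-function}), so the associated Gram matrix is invertible.

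Finally, to identify (3) with (2) up to a constant, I apply Lemma \ref{SS-proj} to the linearly independent list
\[
k_0^{(r_0-1)},\ldots, k_0,\ k_{\beta_1}^{(r_1-1)},\ldots, k_{\beta_1},\ \ldots,\ k_{\beta_n}^{(r_n-1)},\ldots, k_{\beta_n}
\]
that spans $[\tilde f]^{\perp}$. The lemma gives that $\mathcal{S}_{R(f)} = D(k_0^{(r_0)};\,\cdots)$ equals $\det(G(\cdots))$ times the projection of $k_0^{(r_0)}$ onto the orthogonal complement of this span, which is $[\tilde f]$. Hence $\mathcal{S}_{R(f)}$ is a nonzero scalar multiple of $\Pi_{[\tilde f]}(k_0^{(r_0)})$, completing the cycle.

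The main (and only real) obstacle is making sure Proposition \ref{P:invariant_f} applies to $\tilde f$ rather than $f$; once this passage from $f$ to $\tilde f$ is justified by Corollary \ref{equal-perp}, the rest is essentially automatic from the Gram-determinant formalism of Section \ref{Gram}.
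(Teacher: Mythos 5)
Your proposal is correct and follows essentially the same route as the paper: Theorem \ref{feqfr} (of which Corollary \ref{equal-perp} is a restatement) identifies $[f]$ with $[\tilde f]$ and with the orthogonal complement of the span of the kernels, Lemma \ref{SS-proj} handles the Shapiro--Shields function, and the orthogonality conditions together with linear independence of the kernels pin down $\varphi$. Your explicit orthogonal decomposition $k_0^{(r_0)} = \Pi_{[\tilde f]}(k_0^{(r_0)}) + w$ is just a slightly more concrete phrasing of the paper's dimension-count uniqueness argument.
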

\begin{proof}
The fact that $\Pi_{[f]}(k_0^{(r_0)}) = \Pi_{[\tilde{f}]}(k_0^{(r_0)})$ is given by Theorem \ref{feqfr}. 
Now, let $\mathcal{M} = \textrm{span}\left\{k_{\beta_j}^{(\ell)}: 0\leq\ell\leq r_j-1,\ 0\leq j\leq n \right\}^{\perp}$. Again, by Theorem \ref{feqfr}, $\mathcal{M} = [f] = [\tilde{f}]$. So by Lemma \ref{SS-proj}, we have $\S_{R(f)}$ and $\Pi_{[\tilde{f}]}(k_0^{(r_0)})$ are equal up to a constant multiple. 
Following the discussion after Definition \ref{SS-function}, we know that $\langle \Pi_{[\tilde{f}]}(k_0^{(r_0)}), k_{\beta_j}^{(\ell)} \rangle=0$ for $0 \le \ell \le r_j-1$ and $0 \le j \le n$, which is also required of $\varphi$. Since $\dim(\mathcal{M}^\perp) = r_0 + \cdots + r_n$, this uniquely determines both $\Pi_{[\tilde{f}]}(k_0^{(r_0)})$ and $\varphi$, which then must be equal up to a constant multiple. 
\end{proof}

We have included the equivalence $(iv)$ above in order to provide a more computationally explicit description. Conditions $(i), (ii)$, and $(iv)$ generalize results of the first author in \cite{felder2020general}, initially proven for $H^2_w$ spaces.

\section{Extraneous Zeros}

So far, we have seen various descriptions of analogues of finite Blaschke products, namely those in Proposition \ref{projections}. We also saw that for $p, q \in \Pol$, $R(p) = R(q)$ if and only if $[p] = [q]$. 
However, in certain settings, we will see that we have a surprising strengthening of this fact. Namely, Corollary \ref{single_vec_proj} will show, under certain assumptions, that $[p] = [q]$ if and only if $\Pi_{[p]}(k_0^{(d)}) = \Pi_{[q]}(k_0^{(d)})$, for appropriate $d$.

In a similar fashion, one may ask, for different reproducible multisets $A$ and $B$, is it possible that $\S_A$ is a constant multiple of $\S_B$? 
We start by pointing out that for a reproducible multiset $Z$, the function $\S_Z$ can vanish off of $Z$ (e.g. see discussion in  \cite[Section~5]{cheng2019inner}). In this case, we naturally say that $\S_Z$ has an extraneous zero. On the other hand, the following lemma asserts that any such extraneous zero must be different from the origin.

\begin{lemma}
\label{L:SS_extraneous_zero}
Let $Z$ be a reproducible multiset with $0$ appearing $m_0$ times. Then $\S_Z$ vanishes at the origin with order precisely $m_0$. Namely, any extraneous zero of a Shapiro--Shields function cannot be at the origin.
\end{lemma}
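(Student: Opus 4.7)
The plan is to read off the order of vanishing at the origin directly from the determinantal/projection description of $\S_Z$. Recall from the discussion following Definition~\ref{SS-function} that, with $\cM$ denoting the span of
\[
\big\{k_{0}^{(m_0-1)},\ldots,k_{0},\ k_{\beta_1}^{(m_1-1)},\ldots,k_{\beta_1},\ \ldots,\ k_{\beta_s}^{(m_s-1)},\ldots,k_{\beta_s}\big\},
\]
the function $\S_Z$ is a nonzero scalar multiple of $\Pi_{\cM^{\perp}}(k_0^{(m_0)})$. The scalar is exactly $\delta:=\det G(v_1,\ldots,v_n)$, which is positive since the listed kernels are linearly independent (linear independence of reproducing kernels and their derivatives is the same fact used throughout Section~\ref{RepPoints}).

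The lower bound on the order is immediate. Since $k_0^{(\ell)}\in\cM$ for every $0\leq \ell\leq m_0-1$, we have $\S_Z\perp k_0^{(\ell)}$, so
\[
\S_Z^{(\ell)}(0)=\inner{\S_Z}{k_0^{(\ell)}}=0\quad\text{for all } 0\leq \ell\leq m_0-1,
\]
giving $\ord_0(\S_Z)\geq m_0$.

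For the matching upper bound, I would compute $\S_Z^{(m_0)}(0)$ by pairing with $k_0^{(m_0)}$:
\[
\S_Z^{(m_0)}(0)=\binner{\S_Z}{k_0^{(m_0)}}=\delta\cdot\binner{\Pi_{\cM^{\perp}}(k_0^{(m_0)})}{k_0^{(m_0)}}=\delta\cdot\bigl\|\Pi_{\cM^{\perp}}(k_0^{(m_0)})\bigr\|^{2},
\]
where in the last equality I used that $\Pi_{\cM^{\perp}}$ is self-adjoint and idempotent. Since $\delta>0$, nonvanishing of $\S_Z^{(m_0)}(0)$ reduces to showing $\Pi_{\cM^{\perp}}(k_0^{(m_0)})\neq 0$, equivalently $k_0^{(m_0)}\notin\cM$. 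But the full collection $\{k_0^{(m_0)}\}\cup\{v_1,\ldots,v_n\}$ is a set of kernel functions (and derivatives) at distinct points with distinct derivative orders at each point, so it is linearly independent by the argument recalled above. Hence $k_0^{(m_0)}\notin\spn\{v_1,\ldots,v_n\}=\cM$, which gives $\S_Z^{(m_0)}(0)\neq 0$ and thus $\ord_0(\S_Z)=m_0$.

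The only step that requires any care is the linear independence at the end; everything else is essentially bookkeeping on the projection formula. This is not really an obstacle, since it was already established in the paragraph immediately after Definition~\ref{SS-function} by pairing the would-be vanishing linear combination against a suitable polynomial. So I would just cite that argument and conclude.
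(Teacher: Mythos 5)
Your proof is correct and follows essentially the same route as the paper: both reduce the claim to the identity $\S_Z^{(m_0)}(0)=\delta\,\big\|\Pi_{\cM^{\perp}}(k_0^{(m_0)})\big\|^2$ with $\delta>0$ and then check that this projection is nonzero. The only (immaterial) difference is in that last step: the paper exhibits the explicit polynomial $p(z)=\prod_{\beta\in Z}(z-\beta)\in[p]=\cM^{\perp}$ with $\inner{p}{k_0^{(m_0)}}=p^{(m_0)}(0)\neq 0$, whereas you invoke linear independence of the kernel family to conclude $k_0^{(m_0)}\notin\cM$; both ultimately rest on pairing against a suitable polynomial.
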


\begin{proof}
Let $p(z) = \prod_{\beta \in Z}(z - \beta)$. Then the inner product $\inner{\S_{Z}}{k_0^{(m_0)}}$ is a nonzero constant multiple of
\[
    \binner{\Pi_{[p]}(k_0^{(m_0)})}{k_0^{(m_0)}} = \big\|\Pi_{[p]}(k_0^{(m_0)})\big\|^2,
\]
which is not zero since, by hypothesis, $k_0^{(m_0)}$ is not orthogonal to $p$. As a result, any extraneous zero of $\S_Z$ must be different from the origin.
\end{proof}

We will also need the following simple lemma about projection on orthogonal complements of sets of vectors. 
\begin{lemma}
\label{L:orthogonal_proj}
Let $M$ and $N$ be two sets of vectors in a Hilbert space. Then for any $v$, we have $\Pi_{(M\cup N)^{\perp}}(v)=\Pi_{M^{\perp}}(v)$ if and only if $\Pi_{M^{\perp}}(v)\perp N$.
\end{lemma}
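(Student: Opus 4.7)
The plan is to reduce everything to the basic identity $(M\cup N)^{\perp} = M^{\perp}\cap N^{\perp}$ together with the uniqueness of the orthogonal decomposition $v = \Pi_W(v) + (v-\Pi_W(v))$ with $\Pi_W(v)\in W$ and $v-\Pi_W(v)\in W^{\perp}$. Throughout, I would work with the closed subspace $W = (M\cup N)^{\perp}$, so that $W^{\perp} = \overline{\spn}(M\cup N)\supseteq \overline{\spn}(M)$.

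For the forward implication, I would simply observe that if $\Pi_{M^{\perp}}(v) = \Pi_{(M\cup N)^{\perp}}(v)$, then this common vector lies in $(M\cup N)^{\perp}\subseteq N^{\perp}$, which is precisely the orthogonality claim $\Pi_{M^{\perp}}(v)\perp N$. For the converse, I would start from $\Pi_{M^{\perp}}(v)\in M^{\perp}$ (by definition) together with the hypothesis $\Pi_{M^{\perp}}(v)\perp N$, giving membership in $M^{\perp}\cap N^{\perp}=(M\cup N)^{\perp}$. Simultaneously, $v - \Pi_{M^{\perp}}(v)\in \overline{\spn}(M)\subseteq\overline{\spn}(M\cup N) = \big((M\cup N)^{\perp}\big)^{\perp}$. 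By uniqueness of orthogonal projection, $\Pi_{M^{\perp}}(v) = \Pi_{(M\cup N)^{\perp}}(v)$.

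There is no real obstacle here; the lemma is purely a bookkeeping fact about orthogonal complements in a Hilbert space, and the only point worth flagging is the standard identification $(M\cup N)^{\perp}=M^{\perp}\cap N^{\perp}$, which is what makes the two directions essentially symmetric. The lemma will be invoked in the next section with $M$ a set of kernel functions whose orthogonal complement is a shift-invariant subspace $[p]$ and $N$ consisting of additional kernel functions at candidate extraneous zeros, to detect when adjoining those new kernels leaves the projection of $k_0^{(d)}$ unchanged.
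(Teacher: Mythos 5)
Your proof is correct and follows essentially the same route as the paper's: the forward direction is the observation that the common projection lies in $(M\cup N)^{\perp}\subseteq N^{\perp}$, and the converse combines $\Pi_{M^{\perp}}(v)\in M^{\perp}\cap N^{\perp}=(M\cup N)^{\perp}$ with $v-\Pi_{M^{\perp}}(v)\in (M^{\perp})^{\perp}\subseteq ((M\cup N)^{\perp})^{\perp}$ and uniqueness of the orthogonal decomposition. No gaps.
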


\iffalse
\begin{proof}
The necessary direction is clear. For the sufficient direction, assume that $\Pi_{M^{\perp}}(v)\perp N$. Then we have $\Pi_{M^{\perp}}(v)$ belongs to $(M\cup N)^{\perp}$. In addition, note that
\[v-\Pi_{M^{\perp}}(v)\in (M^{\perp})^{\perp} \subseteq ((M\cup N)^{\perp})^{\perp}.\]
It then follows that $\Pi_{M^{\perp}}(v)$ is the orthogonal projection of $v$ onto $(M\cup N)^{\perp}$, as required.
\end{proof}
\fi

In the following proposition, we show that shift-invariant subspaces generated by polynomials are characterized uniquely by the projection of a kernel function at the origin, as long as Shapiro--Shields functions do not possess extraneous reproducible zeroes.

\begin{proposition}\label{extraneous}
The following two statements are equivalent.
\begin{itemize}
    \item[(a)] All Shapiro--Shields functions do not possess extraneous zeros that are reproducible points.
    \item[(b)] For any two polynomials $p$ and $q$, if $\Pi_{[p]}(k_0^{(d_p)})=\Pi_{[q]}(k_0^{(d_q)})$, then $[p]=[q]$. Here $d_p$ (respectively, $d_q$) denotes the multiplicity of the zero of $p$ (respectively, $q$) at the origin.
\end{itemize}
\end{proposition}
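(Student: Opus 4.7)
The plan is to translate the two statements into comparisons of Shapiro--Shields functions via Proposition \ref{projections}, which identifies $\Pi_{[p]}(k_0^{(d_p)})$ with a nonzero scalar multiple of $\S_{R(p)}$, and then to exploit the structure of $R(p)$ versus $[p]$ supplied by Corollary \ref{equal-perp}. The key auxiliary tools are Lemma \ref{L:SS_extraneous_zero} (the order of vanishing of $\S_Z$ at the origin is always exactly $m_0$) and Lemma \ref{L:orthogonal_proj} (adding to a spanning set vectors that the projection is already orthogonal to does not change the projection).

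For $(a) \Rightarrow (b)$, I would set $F := \Pi_{[p]}(k_0^{(d_p)}) = \Pi_{[q]}(k_0^{(d_q)})$, so that $F$ is a nonzero constant multiple of both $\S_{R(p)}$ and $\S_{R(q)}$; in particular the two Shapiro--Shields functions are proportional and hence share their order of vanishing at every point. Lemma \ref{L:SS_extraneous_zero} extracts $d_p = d_q$ from the order of vanishing at $0$. For any other reproducible point $\beta$, the multiplicity of $\beta$ in $R(p)$ equals the minimum of $\ro(\beta)$ and the order of vanishing of $\S_{R(p)}$ at $\beta$: the construction of $\S_{R(p)}$ forces vanishing to at least this multiplicity, and assumption (a) rules out any further vanishing while the multiplicity stays below $\ro(\beta)$. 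The same description applies to $R(q)$, so the multiplicities agree and $R(p) = R(q)$, whence $[p] = [q]$ by Corollary \ref{equal-perp}.

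For $(b) \Rightarrow (a)$, I would argue by contrapositive. Suppose some $\S_Z$ has an extraneous reproducible zero at $\beta$, meaning $\beta$ appears with multiplicity $m$ in $Z$ with $m < \ro(\beta)$ and $\S_Z^{(m)}(\beta) = 0$; Lemma \ref{L:SS_extraneous_zero} forces $\beta \neq 0$. Let $Z' = Z \cup \{\beta\}$ (still a reproducible multiset, since $m + 1 \le \ro(\beta)$), and set $p(z) = \prod_{\alpha \in Z}(z - \alpha)$ and $q(z) = (z - \beta)\, p(z)$, so that $R(p) = Z$, $R(q) = Z'$, and $d_p = d_q = m_0$. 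Writing $[p] = M^\perp$ with $M$ the span of the kernels appearing in $\S_Z$ aside from $k_0^{(m_0)}$, Theorem \ref{feqfr} gives $[q] = (M \cup \{k_\beta^{(m)}\})^\perp$. Since $\Pi_{[p]}(k_0^{(d_p)})$ is a constant multiple of $\S_Z$, and $\S_Z$ is orthogonal to $k_\beta^{(m)}$ by hypothesis, Lemma \ref{L:orthogonal_proj} yields
\[
\Pi_{[p]}(k_0^{(d_p)}) = \Pi_{[q]}(k_0^{(d_q)}).
\]
However, the linear independence of the reproducing-kernel derivatives (noted just after Definition \ref{SS-function}) makes the containment $[q] \subsetneq [p]$ strict, so $[p] \neq [q]$, contradicting $(b)$.

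The only real subtlety is in this last implication: one needs an \emph{actual} equality of projections, not merely proportionality of the associated Shapiro--Shields functions, and Lemma \ref{L:orthogonal_proj} is the precise tool for promoting the ``accidental'' orthogonality $\S_Z \perp k_\beta^{(m)}$ to the equality of the projections onto $[p]$ and its strictly smaller subspace $[q]$. Beyond that, the proof is careful bookkeeping with multiplicities, with Lemma \ref{L:SS_extraneous_zero} handling the origin as a special case.
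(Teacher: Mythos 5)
Your proof is correct, and the contrapositive direction $(b)\Rightarrow(a)$ is essentially identical to the paper's: the same construction $q=(z-\beta)p$, the same use of Lemma \ref{L:SS_extraneous_zero} to force $\beta\neq 0$ (hence $d_p=d_q$), and the same use of Lemma \ref{L:orthogonal_proj} to upgrade the accidental orthogonality $\S_Z\perp k_\beta^{(m)}$ to equality of the two projections. Where you genuinely diverge is in $(a)\Rightarrow(b)$. The paper expands $\Pi_{[p]}(k_0^{(d_p)})$ as a linear combination of kernels, observes (via the remark following Lemma \ref{SS-proj}) that the coefficient of each top-order kernel $k_{\beta_j}^{(m_j-1)}$ is $\pm\binner{\S_{\tilde{Z}_1}}{k_{\beta_j}^{(m_j-1)}}$ for the multiset $\tilde{Z}_1$ with one copy of $\beta_j$ deleted, invokes (a) to see these coefficients are nonzero, and then matches the two expansions using linear independence of the kernel functions. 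You instead recover the multiset $R(p)$ intrinsically from the function: under (a) the multiplicity of each reproducible $\beta$ in $R(p)$ is $\min\bigl(\ro(\beta),\ \text{order of vanishing of }\S_{R(p)}\text{ at }\beta\bigr)$, and proportional Shapiro--Shields functions therefore force $R(p)=R(q)$, whence $[p]=[q]$ by Corollary \ref{equal-perp}. Your route avoids the coefficient bookkeeping entirely and is arguably more transparent; the paper's route avoids having to argue about orders of vanishing at points where the multiplicity already saturates $\ro(\beta)$, which your $\min$ correctly absorbs. One further point in your favor: your explicit requirement $m<\ro(\beta)$ in the definition of an extraneous reproducible zero is exactly what is needed for $Z\cup\{\beta\}$ to remain a reproducible multiset (otherwise $R(q)=R(p)$ and the counterexample collapses); the paper leaves this implicit.
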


\begin{proof}
Assume that for some multiset $Z$ of reproducible points with $\beta_0 = 0$ appearing $m_0 \ge 0$ times and non-zero values $\beta_1, \ldots, \beta_s$ each appearing with multiplicity $m_j$, $1 \le m_j \le \textrm{ord}(\beta_j)$, the corresponding Shapiro--Shields function
\[\S_{Z} = D\big(k_0^{(m_0)}; k_0^{(m_0 - 1)},\ldots, k_0, k_{\beta_1}^{(m_1-1)},\ldots, k_{\beta_1},k_{\beta_s}^{(m_s-1)},\ldots, k_{\beta_s}\big)\]
has an extraneous zero. This means that either $\S_{Z}$ has an additional zero at a reproducible point $\beta\notin Z$ or it has a zero of multiplicity at least $m_j+1$ at some $\beta=\beta_j$. By Lemma \ref{L:SS_extraneous_zero}, $\beta\neq 0$. Applying Proposition \ref{projections}, we note that $\S_{Z}$ is a non-zero constant multiple of $f=\Pi_{[p]}(k_0^{(m_0)})$, where
\[p(z) = \prod_{j=0}^{s}(z-\beta_j)^{m_j}.\]
It follows that $f(\beta)=0$ (so $f\perp k_{\beta}$) in the case $\beta\notin Z$, or $f^{(m_j)}(\beta_j)=0$ (so $f\perp k_{\beta_j}^{(m_j)}$) in the case $\beta=\beta_j$ for some $j$. Put $q(z) = (z-\beta)p(z)$. Then $f$ coincides with $\Pi_{[q]}(k_0^{(m_0)})$, by Proposition \ref{P:invariant_f} and Lemma \ref{L:orthogonal_proj}. Since $\beta\neq 0$, we see that $d_q = d_p = m_0$ and hence $\Pi_{[p]}(k_0^{d_p}) = \Pi_{[q]}(k_0^{d_q})$ even though $[p]\neq [q]$. 

Conversely, assume that (a) holds. Let $p, q$ be two polynomials. Using Theorem \ref{feqfr}, we write
\[[p] = \Big\{k_{\beta_j}^{(\ell)}:  0\leq\ell < m_j,\ 0\leq j\leq s\Big\}^{\perp},\]
and
\[[q] = \Big\{k_{\alpha_j}^{(\ell)}:  0\leq\ell < n_j,\ 0\leq j\leq t\Big\}^{\perp}.\]
We know that $\Pi_{[p]}(k_0^{(d_p)})$ is a non-zero constant multiple of the Shapiro--Shields function corresponding to a multiset $Z_1$ consisting of $\beta_j$ with multiplicity $m_j$ for $0\leq j\leq s$. In the expansion of $\Pi_{[p]}(k_0^{(d_q)})$, for each $1\leq j\leq s$, the coefficient of $k_{\beta_j}^{(m_j-1)}$ is plus or minus of $\inner{\S_{\tilde{Z}_1}}{k_{\beta_j}^{(m_j-1)}}$, where $\tilde{Z}_1$ is obtained from $Z_1$ by removing one appearance of $\beta_j$. By the hypothesis, this coefficient is nonzero. By the same argument, the coefficient of any $k_{\alpha_j}^{(n_j-1)}$ in the expansion of $\Pi_{[q]}(k_0^{(d_q)})$ is nonzero. It follows that if $\Pi_{[p]}(k_0^{(d_p)})=\Pi_{[q]}(k_0^{(d_q)})$, then due to the linear independence of the kernel functions, each $\beta_j$ equals some $\alpha_{\ell}$ and $m_j = n_{\ell}$, and vice versa. Therefore, $[p]=[q]$.
\end{proof}

In general, the projection of a single vector onto a subspace does not characterize the subspace itself. However, in the setting of Theorem \ref{extraneous}, we have a surprising immediate corollary.

\begin{corollary}\label{single_vec_proj}
Let $p,q \in \Pol$ with $\ord_0(p) = d_p$ and $\ord_0(q) = d_q$.  Suppose that $\Pi_{[p]}(k_0^{(d_p)})$ and $\Pi_{[q]}(k_0^{(d_q)})$ vanish only on $R(p)$ and $R(q)$, respectively. Then, $[p] = [q]$ if and only if $\Pi_{[p]}(k_0^{(d_p)}) = \Pi_{[q]}(k_0^{(d_q)})$. 
\end{corollary}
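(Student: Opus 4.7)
The plan is to reduce both implications to Corollary~\ref{equal-perp} (which says $[p]=[q]$ if and only if $R(p)=R(q)$) and to use Proposition~\ref{projections}, which identifies $\Pi_{[p]}(k_0^{(d_p)})$ with a nonzero constant multiple of the Shapiro--Shields function $\S_{R(p)}$. The hypothesis on extraneous zeros is exactly what converts an equality of projections into an equality of reproducible-zero multisets.

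For the forward direction, I would assume $[p]=[q]$. Corollary~\ref{equal-perp} yields $R(p)=R(q)$. Since $0\in\Omega$ is reproducible of every order, the multiplicity of $0$ in $R(p)$ coincides with $\ord_0(p)=d_p$, and similarly the multiplicity of $0$ in $R(q)$ equals $d_q$; hence $d_p=d_q$. The equality $\Pi_{[p]}(k_0^{(d_p)})=\Pi_{[q]}(k_0^{(d_q)})$ is then immediate because both sides are the same projection of the same vector.

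For the reverse direction, I would set $f:=\Pi_{[p]}(k_0^{(d_p)})=\Pi_{[q]}(k_0^{(d_q)})$. By Proposition~\ref{projections}, $f$ is a nonzero constant multiple of both $\S_{R(p)}$ and $\S_{R(q)}$. The standing assumption that $\Pi_{[p]}(k_0^{(d_p)})$ vanishes only on $R(p)$ says precisely that the multiset of reproducible zeros of $f$ is exactly $R(p)$; the analogous assumption for $q$ gives that it also equals $R(q)$. Therefore $R(p)=R(q)$, and Corollary~\ref{equal-perp} delivers $[p]=[q]$.

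There is no real obstacle here: the corollary is in essence Proposition~\ref{extraneous} restricted to the two specific Shapiro--Shields functions $\S_{R(p)}$ and $\S_{R(q)}$, with the no-extraneous-reproducible-zeros condition now granted by hypothesis rather than assumed for the whole space. The one subtlety worth spelling out explicitly is the forward-direction extraction of $d_p=d_q$, which relies only on the fact that $0$ is an interior point of $\Omega$ and is therefore reproducible of every order.
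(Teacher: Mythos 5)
Your proof is correct, but it takes a different route from the one the paper implicitly relies on. The paper presents this corollary as an immediate consequence of Proposition~\ref{extraneous}, whose relevant direction is proved by expanding $\Pi_{[p]}(k_0^{(d_p)})$ as a linear combination of kernel functions, showing that the coefficient of each ``top'' kernel $k_{\beta_j}^{(m_j-1)}$ is $\pm\binner{\S_{\tilde{Z}_1}}{k_{\beta_j}^{(m_j-1)}}\neq 0$, and then invoking linear independence of kernel functions; note that this argument consumes the no-extraneous-zero hypothesis for the \emph{reduced} multisets $\tilde{Z}_1$, not just for $\S_{R(p)}$ and $\S_{R(q)}$ themselves. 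Your argument instead reads the conclusion directly off the zero multisets: the common function $f=\Pi_{[p]}(k_0^{(d_p)})=\Pi_{[q]}(k_0^{(d_q)})$ is a nonzero multiple of both $\S_{R(p)}$ and $\S_{R(q)}$, and the hypothesis forces its reproducible zero multiset to equal both $R(p)$ and $R(q)$, whence Corollary~\ref{equal-perp} applies. This is more elementary and, importantly, better matched to the corollary as stated, since it uses only the hypothesis actually given (no extraneous zeros of the two specific projections) rather than a global assumption on all Shapiro--Shields functions. Two small points worth making explicit if you write this up: the hypothesis ``vanishes only on $R(p)$'' must be read in the multiset sense (no extra zeros \emph{and} no excess multiplicity at the $\beta_j$), which is the paper's intended notion of extraneous zero; and in the forward direction the identity $d_p=d_q$ follows, as you say, from $R(p)=R(q)$ together with the fact that $0\in\Omega$ is reproducible of every order, so $0$ appears in $R(p)$ with multiplicity exactly $\ord_0(p)$.
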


When Shapiro--Shields functions do possess extraneous zeros, we show one way in which two different reproducible multisets can generate the same Shapiro--Shields function. 

\begin{theorem}
Let $A$ be a reproducible multiset. Then for any reproducible multiset $B$ such that $A\subset B\subset Z(\S_A)$, the function $\S_B$ is a constant multiple of $\S_{A}$.
\end{theorem}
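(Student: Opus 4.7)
The plan is to show $\S_B$ and $\S_A$ coincide up to a nonzero scalar by identifying both as projections of the \emph{same} vector $k_0^{(m_0)}$ onto two different orthogonal complements, and then invoking Lemma \ref{L:orthogonal_proj} to collapse the two projections.

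First I would fix notation: write $A = \{0^{m_0}, \beta_1^{m_1}, \ldots, \beta_s^{m_s}\}$ with the $\beta_j$ distinct and nonzero. The key preliminary observation is that by Lemma \ref{L:SS_extraneous_zero}, $0$ appears with multiplicity exactly $m_0$ in $Z(\S_A)$. Since $A\subseteq B\subseteq Z(\S_A)$ as multisets, the multiplicity of $0$ in $B$ is pinned at exactly $m_0$. Hence I may write
\[
B = \big\{0^{m_0}, \beta_1^{n_1}, \ldots, \beta_s^{n_s}, \gamma_1^{p_1}, \ldots, \gamma_t^{p_t}\big\},
\]
where $n_j \geq m_j$ for each $j$ and the $\gamma_i$ are reproducible points not in $A$. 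In particular, both $\S_A$ and $\S_B$ are Shapiro--Shields functions based on the vector $k_0^{(m_0)}$.

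Next, using the discussion following Definition \ref{SS-function} together with Lemma \ref{SS-proj}, I would write $\S_A = c_A\, \Pi_{M_A^\perp}(k_0^{(m_0)})$ and $\S_B = c_B\, \Pi_{M_B^\perp}(k_0^{(m_0)})$ for nonzero scalars $c_A, c_B$, where
\[
M_A = \spn\big\{k_0^{(\ell)}: 0\leq\ell\leq m_0-1\big\}\cup\big\{k_{\beta_j}^{(\ell)}: 0\leq\ell\leq m_j-1,\ 1\leq j\leq s\big\}
\]
and $M_B$ is the analogous span built from the data of $B$. Since $A\subset B$ we have $M_A\subseteq M_B$, and $M_B$ is obtained from $M_A$ by adjoining the finite collection of vectors
\[
N = \big\{k_{\beta_j}^{(\ell)}: m_j\leq\ell\leq n_j-1,\ 1\leq j\leq s\big\}\cup\big\{k_{\gamma_i}^{(\ell)}: 0\leq\ell\leq p_i-1,\ 1\leq i\leq t\big\}.
\]

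By Lemma \ref{L:orthogonal_proj}, the equality $\Pi_{M_B^\perp}(k_0^{(m_0)}) = \Pi_{M_A^\perp}(k_0^{(m_0)})$ holds provided $\Pi_{M_A^\perp}(k_0^{(m_0)})\perp N$. But $\Pi_{M_A^\perp}(k_0^{(m_0)})$ is a constant multiple of $\S_A$, and the hypothesis $B\subseteq Z(\S_A)$ says exactly that $\S_A$ vanishes at $\beta_j$ to order at least $n_j$ and at $\gamma_i$ to order at least $p_i$. Rewriting these vanishing conditions as $\langle \S_A, k_{\beta_j}^{(\ell)}\rangle = 0$ for $0\leq\ell\leq n_j-1$ (and similarly at $\gamma_i$) gives the required orthogonality. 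Putting this together yields $\S_B = (c_B/c_A)\,\S_A$, as desired.

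The only subtle point, and what I regard as the main place to be careful, is establishing the equality $n_0 = m_0$ for the multiplicity at the origin; without Lemma \ref{L:SS_extraneous_zero} the two Shapiro--Shields functions would be projections of different vectors $k_0^{(m_0)}$ and $k_0^{(n_0)}$, and the argument would not get off the ground. Once that is noted, the rest is bookkeeping in Lemma \ref{L:orthogonal_proj}.
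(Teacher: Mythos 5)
Your proof is correct and follows essentially the same route as the paper's: both identify $\S_A$ and $\S_B$ as constant multiples of projections of $k_0^{(m_0)}$ onto orthogonal complements of families of kernel functions, use Lemma \ref{L:SS_extraneous_zero} to pin the multiplicity at the origin, and then apply Lemma \ref{L:orthogonal_proj} together with the vanishing conditions coming from $B\subseteq Z(\S_A)$. The only cosmetic difference is that the paper reduces to adjoining one extra element at a time, whereas you adjoin the whole collection $N$ in a single application of Lemma \ref{L:orthogonal_proj}; since that lemma is stated for sets of vectors, this is fine.
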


\begin{proof}
It suffices to prove the case $B$ having exactly one element more than $A$. Write
\[A = \Big\{\underbrace{0 = \beta_0, \ldots, \beta_0}_{r_0 \textrm{ times}}, \underbrace{\beta_1, \ldots, \beta_1}_{r_1 \textrm{ times}}, \ldots, \underbrace{\beta_n, \ldots, \beta_n}_{r_n \textrm{ times}} \Big\},
\]
where $r_0\geq 0$ and $r_1,\ldots, r_n\geq 1$. Then $\S_A$ is a nonzero constant multiple of the projection $\Pi_{M^{\perp}}(k_0^{(r_0)})$, where
\[M = \{k_{\beta_j}^{(\ell)}: 0\leq\ell< r_j,\ 0\leq j\leq n\}.\]
Let $\beta$ be the extra element that $B$ has more than $A$. Note that $\beta\neq 0$ by Lemma \ref{L:SS_extraneous_zero}. If $\beta=\beta_{s}$ for some $1\leq s\leq n$, set $u = k_{\beta_s}^{(r_s)}$. If $\beta\neq\beta_j$ for all $j$, let $u=k_{\beta}$. Since $B\subset Z(\S_A)$, we have that $\Pi_{M^{\perp}}(k_0^{(r_0)})\perp u$. As a consequence,
\[\Pi_{M^{\perp}}(k_0^{(r_0)}) = \Pi_{(M\cup\{u\})^{\perp}}(k_0^{(r_0)}).\]
On the other hand, $\S_B$ is a nonzero constant multiple of  $\Pi_{(M\cup\{u\})^{\perp}}(k_0^{(r_0)})$. The conclusion of the theorem then follows.
\end{proof}

We conclude with a few remarks. For $f$ a polynomial, we have described $[f]$ in general reproducing kernel Hilbert spaces of analytic functions. As we have seen, $[f]$ depends closely on reproducible points of the underlying space. It would be interesting to go beyond the polynomial case but it seems to be a more difficult problem.

Blaschke products (and more generally, inner functions) on the Hardy space possess a remarkable multiplicative property: if $B_1$ and $B_2$ are finite Blaschke products, then $B_1B_2$ is also a finite Blaschke product. On the other hand, this property does not hold on other spaces, such as the Bergman and Dirichlet spaces, which can be seen from  direct calculation. While the multiplicative property of classical Blaschke products trivially follows from the definition, it is quite curious why such a property even holds true, in light of Theorem  \ref{modified_le}.
In terms of extremal problems, it is somehow remarkable that the solution to the linear $n$-point extremal problem is given by the \textit{product} of the $n$ individual 1-point problems.

Lastly, we would like to mention that analyticity is not actually needed to establish the results throughout this paper. Instead, one could make the weaker assumption that functions in $\h$ are simply smooth at the origin. We have chosen to require analyticity because it is unclear if there are any (interesting) RKHSs for which the polynomials are dense and the shift is bounded, yet not comprised of analytic functions.

\begin{acknowledgements}\label{ackref}
The first author would like to thank John McCarthy and Dima Khavinson for helpful discussion. The authors would like to thank the referee for a thorough reading and many helpful suggestions that improved the presentation of the paper.
\end{acknowledgements}

%\bibliography{refs}
%\bibliographystyle{amsplain}

\providecommand{\MR}{\relax\ifhmode\unskip\space\fi MR }
% \MRhref is called by the amsart/book/proc definition of \MR.
\providecommand{\MRhref}[2]{%
  \href{http://www.ams.org/mathscinet-getitem?mr=#1}{#2}
}
\providecommand{\href}[2]{#2}

\affiliationone{% in this example, two authors share an institution
   Christopher Felder\\
   Department of Mathematics and Statistics\\
   Washington University in St. Louis\\
   St. Louis, MO 63136\\
   USA
   \email{cfelder@wustl.edu}}
% Important: Do not put any empty line here.
\affiliationtwo{% in this example, one author has two addresses}
   Trieu Le\\
   Department of Mathematics and Statistics\\
   The University of Toledo\\
   Toledo, OH 43606\\
   USA
   \email{trieu.le2@utoledo.edu}}
% Important: Do not put any empty line here.
% Use \affiliationthree{} for any address positioned under \affiliationone
% Use \affiliationfour{}  for any address positioned under \affiliationtwo
\end{document}